\newtheorem{theorem}{Theorem}[section]
\newtheorem{lemma}[theorem]{Lemma}
\newtheorem{proposition}[theorem]{Proposition}
\theoremstyle{remark}
\newtheorem*{remark}{Remark}
\title{The cap set problem: 41-cap 5-flats}
\author{Henry (Maya) Robert Thackeray}
\address{Department of Mathematics and Applied Mathematics, University of Pretoria, Pretoria, 0002 South Africa, maya.thackeray@up.ac.za, mayart314@outlook.com}
\begin{document}
\begin{abstract}
An \emph{$s$-cap $n$-flat}, or an \emph{$n$-dimensional cap} of \emph{size} $s$, is a pair $(S,F)$ where $F$ is an $n$-dimensional affine space over $\mathbb{Z}/3\mathbb{Z}$ and the size-$s$ subset $S$ of $F$ contains no triple of collinear points. The cap set problem in dimension $n$ asks for the largest $s$ for which an $s$-cap $n$-flat exists. This series of articles investigates the cap set problem in dimensions up to and including $7$. This is the second paper in the series.

By applying and adapting methods from the first paper in the series, we systematically classify all $5$-dimensional caps of size at least $41$.
\end{abstract}
\maketitle

Keywords: cap, cap set, combinatorics, affine space, projective space

MSC2020: 51E20, 05B40, 05D99, 05B25, 51E15

\section{Introduction}

This paper is the second in a series that investigates the cap set problem. For general dimension $n$, the cap set problem asks for the largest positive integer $s$ such that there exists an \emph{$s$-cap $n$-flat} (also called an \emph{$n$-dimensional cap} of \emph{size} $s$), that is, a pair $(S,F)$ such that $F$ is an $n$-dimensional affine space over $\mathbb{Z}/3\mathbb{Z}$ (that is, $F$ is an \emph{$n$-flat}) and the size-$s$ subset $S$ of $F$ contains no triple of collinear points. For an $s$-cap $n$-flat $C$, we write $|C|$ for the size $s$. The cap set problem has been solved for dimensions up to and including $6$ so far. For background and literature results on the cap set problem, see for example Davis and Maclagan \cite{DM03}, Edel et al.\@ \cite{EFLS02}, Ellenberg and Gijswijt \cite{EG17}, and Potechin \cite{P08}.

In the first paper of this series (Thackeray \cite{T21}), certain caps in dimensions up to and including $4$ were classified using what were called standard diagrams. The current paper applies and adapts those techniques to dimension $5$, thus systematically classifying $s$-cap $5$-flats with $s \geq 41$.

We take the paper of Thackeray \cite{T21} as known. In particular, its definitions of a standard diagram and an $\{a,b,c\}$ hyperplane direction for a multiset $\{a,b,c\}$ of nonnegative integers, as well as the names of caps in that paper (square, square pyramid, $990A_{1}$, and so on), are used throughout the current paper. A \emph{complete} cap is a cap $(S,F)$ such that if $(S',F)$ is also a cap and $S \subseteq S'$, then $S = S'$. We use the following definitions that relate to $3$-dimensional caps: the \emph{centre} of a cube minus long diagonal is the unique point that is the midpoint of three different cap line segments, and for a tetrahedron plus centre $C$ with its three $\{2,2,1\}$ hyperplane directions, the \emph{centre} of $C$ is the intersection of the $1$-cap $2$-flats in those three $2$-flat directions.

\section{Lemmas}\label{secrestr}

We prepare for later arguments by proving the following lemmas.

\begin{lemma}\label{n4s17m3}
We have the following.
\begin{itemize}
\item[(a)] If $C$ is a $20$- or $19$-cap $4$-flat, then $C$ has at least one $\{9,9,2\}$ or $\{9,9,1\}$ hyperplane direction.
\item[(b)] If $C$ is a $18$-cap $4$-flat with no $\{9,9,0\}$ hyperplane directions and no $\{9,8,1\}$ hyperplane directions, then $C$ has at least one $\{8,8,2\}$ hyperplane direction $D$ such that at least one of the two $8$-cap $3$-flats in $D$ is a saddled cube.
\item[(c)] If $C$ is a $17$-cap $4$-flat, then $C$ has at least one $\{9,8,0\}$, $\{9,7,1\}$, or $\{8,8,1\}$ hyperplane direction.
\end{itemize}
\end{lemma}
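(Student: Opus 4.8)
The plan is to treat all three parts by the same two-stage argument: first use elementary counting over the $40$ hyperplane directions of $C$ to restrict the possible multiset of slice sizes, and then run a structural case analysis on a largest slice using the classification of $3$-dimensional caps from \cite{T21}. For the counting, write $(a,b,c)$ for the slice sizes of a hyperplane direction of $C$, so that $a+b+c=s$ with $s=|C|$. Summing $a^{2}+b^{2}+c^{2}$ and then $a^{3}+b^{3}+c^{3}$ over all $40$ hyperplane directions, and re-counting by ordered pairs and ordered triples of cap points lying in a common slice, yields the identities
\[
\sum (a^{2}+b^{2}+c^{2}) = s(13s+27), \qquad \sum (a^{3}+b^{3}+c^{3}) = 40s+39s(s-1)+4s(s-1)(s-2),
\]
where $13$ and $4$ are the numbers of hyperplane directions containing a fixed line direction, respectively a fixed plane direction. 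Since a $3$-flat has no $10$-cap, every slice has size at most $9$; by \cite{T21} a $3$-cap of size $9$ is unique up to affine equivalence (and has exactly nine $\{4,4,1\}$ and four $\{3,3,3\}$ plane directions), while the $3$-caps of sizes $8$ and $7$ --- the saddled cube among them --- lie in short explicit lists. Testing these two identities against the finitely many admissible types $(a,b,c)$ with $a+b+c=s$ and $a\le 9$ shows in each case that $C$ has a slice of size $8$ or $9$; for $s=20$ one even obtains a slice of size exactly $9$, and for $s=19$ this stronger statement will follow from the structural stage (or by extending $C$ to a $20$-cap).

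For the structural stage, fix a hyperplane direction of $C$ whose largest slice $S_{0}$ has size $a\in\{8,9\}$, pass to the standard diagram of $C$ along it with slices $S_{0},S_{1},S_{2}$ of sizes $a\ge b\ge c$, and case on $a$ and, when $a=8$, on the isomorphism type of the $8$-cap $S_{0}$. When $a=9$, the rigidity of the unique $9$-cap, together with the standard-diagram condition that no line carries a cap point in each of the three slices, tightly constrains how the remaining $s-9$ points may sit in $S_{1}\cup S_{2}$; reading off the slice types induced in the other $39$ directions then either exhibits a direction of the type demanded by the lemma --- a $\{9,9,2\}$ or $\{9,9,1\}$ direction in (a), a $\{8,8,2\}$ direction with a saddled-cube slice in (b), or a $\{9,8,0\}$, $\{9,7,1\}$, or $\{8,8,1\}$ direction in (c) --- or contradicts a standing hypothesis (the size of $C$, or in (b) the absence of $\{9,9,0\}$ and $\{9,8,1\}$ directions). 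When $a=8$ one repeats the analysis for each admissible $8$-cap $S_{0}$: most $8$-cap types, once completed to a cap of the prescribed size respecting the prescribed forbidden directions, are forced to create a larger slice or a forbidden direction elsewhere, and the configurations that survive are exactly the ones the lemma describes --- for (b), the only remaining way to avoid a $\{9,9,0\}$ or $\{9,8,1\}$ direction puts a saddled cube in one slice of some $\{8,8,2\}$ direction.

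I expect the main obstacle to be the $8$-cap casework in part (b): for each type of $8$-cap in a $3$-flat one must determine precisely which subsets of the two companion slices are compatible with the cap condition, and then check, for each resulting $18$-cap, whether it admits a $\{9,9,0\}$ or a $\{9,8,1\}$ direction --- the saddled-cube conclusion emerging only after this elimination. Parts (a) and (c) should be shorter, their conclusions being disjunctions over several direction types and hence easier to meet; and throughout, the enumeration stays finite and manageable because one always works relative to a largest slice and exploits the uniqueness and plane-section structure of the large $3$-caps supplied by \cite{T21}.
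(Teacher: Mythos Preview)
Your moment identities are correct and are in fact the arithmetic content of the paper's ``standard diagram'' technique: the paper's Figure~\ref{fign4s17} line $L$ encodes precisely a linear combination of your two sums that separates the admissible $(a,b,c)$ from the centre of mass. So the first stage of your plan is essentially what the paper does for part~(c).

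However, the paper's proof takes a much shorter route than your plan. For (a) and (b) it simply cites the explicit classification of $20$-, $19$-, and $18$-cap $4$-flats already carried out in \cite{T21}; there is nothing to prove beyond inspecting those finitely many representatives. Your proposal instead rebuilds that classification from a largest slice, which is a substantial detour --- in particular your ``main obstacle,'' the $8$-cap casework for (b), is exactly the work that \cite{T21} has already packaged. You also invoke extending a $19$-cap to a $20$-cap, which is true but itself a consequence of that classification, so it cannot serve as an independent shortcut.

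For (c), your counting stage (equivalently, the standard diagram) only gets you to the disjunction $\{9,8,0\}$, $\{9,7,1\}$, $\{8,8,1\}$, $\{8,7,2\}$, or $\{7,7,3\}$; eliminating the last two types is the real content. The paper handles this by a computer search: it runs over all invertible linear maps $T$ gluing two representative $7$- or $8$-cap $3$-flats across the $\{7,7,3\}$ or $\{8,7,2\}$ direction and checks in every case that one of the first three types appears elsewhere. Your ``structural stage'' asserts that reading off the other $39$ directions will do this, but you have not carried out that casework, and there is no indication it is small enough to do by hand --- the paper evidently judged it was not. So the plan is sound in outline, but the gap between plan and proof is exactly the labour that the paper either outsources to \cite{T21} or to a computer.
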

\begin{proof}
Parts (a) and (b) follow immediately from the classification of $s$-cap $4$-flats with $s \geq 18$ in Thackeray \cite{T21}. It remains to prove part (c).

The standard diagram in Figure \ref{fign4s17} implies that every $17$-cap $4$-flat has a $\{9,8,0\}$, $\{9,7,1\}$, $\{8,8,1\}$, $\{8,7,2\}$, or $\{7,7,3\}$ hyperplane direction: if some $17$-cap $4$-flat had no such hyperplane direction, then all points $P_{D}$ in the standard diagram would be on or above the line $L$, but the centre of mass of the points $P_{D}$ is $P_{Cr}$, which is below $L$. (In all standard diagrams, we follow notation from Thackeray \cite{T21}: the point $P_{Cr}$ is denoted $+$, and $d$ is the signed vertical distance above the line $L$ in the $(x,y)$ diagram; for example, we have $d = y - (9/2)x + 130$ in Figure \ref{fign4s17}.)
\begin{figure}
\centering
\includegraphics{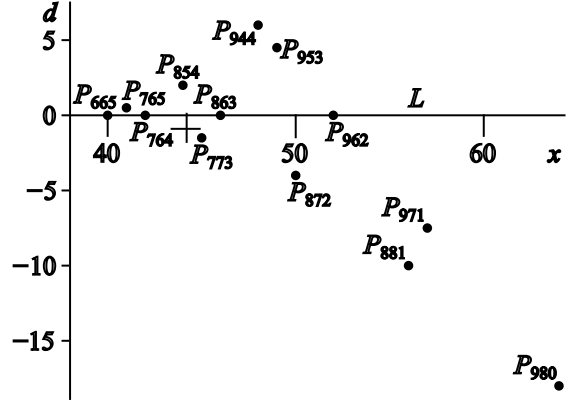}
\caption{Standard diagram for $(n,s) = (4,17)$. The line $L$ has equation $2y = 9x - 260$ and passes through $P_{962}$, $P_{863}$, $P_{764}$, and $P_{665}$.\label{fign4s17}}
\end{figure}

Suppose that some $17$-cap $4$-flat $C$ has no $\{9,8,0\}$, no $\{9,7,1\}$, and no $\{8,8,1\}$ hyperplane directions. It follows that for some co-ordinate $x_{1}$ of $C$, the $3$-flat $x_{1} = -1$ has exactly seven cap points and the $3$-flat $x_{1} = 1$ has exactly seven or exactly eight cap points. Thackeray \cite{T21} gave representatives of the three isomorphism classes of $8$-cap $3$-flats and the two isomorphism classes of $7$-cap $3$-flats. Without loss of generality, the cap in the $3$-flat $x_{1} = -1$ is a representative $7$-cap $3$-flat and the cap in the $3$-flat $x_{1} = 1$ is the image under an invertible linear map $T$ of some representative $7$- or $8$-cap $3$-flat.

For each $T$ and each choice of representative $3$-flats, a computer search found all possible sets of cap points in the $3$-flat $x_{1} = 0$: the search found each point $P$ in that $3$-flat that is not a midpoint of a line segment consisting of a cap point in $x_{1} = -1$ and a cap point in $x_{1} = 1$, and in the case where $x_{1} = 1$ has exactly seven cap points, all lines of three points $P$ were taken into account. In each case, the computer confirmed that:
\begin{itemize}
\item if the $3$-flat $x_{1} = 1$ has exactly seven cap points, then $C$ has a $\{9,8,0\}$, $\{9,7,1\}$, $\{8,8,1\}$, or $\{8,7,2\}$ hyperplane direction; and
\item if the $3$-flat $x_{1} = 1$ has exactly eight cap points, then $C$ has a $\{9,8,0\}$, $\{9,7,1\}$, or $\{8,8,1\}$ hyperplane direction.
\end{itemize}
Therefore, if $C$ has a $\{7,7,3\}$ hyperplane direction, then $C$ has a $\{9,8,0\}$, $\{9,7,1\}$, $\{8,8,1\}$, or $\{8,7,2\}$ hyperplane direction, and if $C$ has an $\{8,7,2\}$ hyperplane direction, then $C$ has a $\{9,8,0\}$, $\{9,7,1\}$, or $\{8,8,1\}$ hyperplane direction. We obtain a contradiction, and the proof is done.
\end{proof}

In the same way, computer searches were used to generate lists of $4$-dimensional caps for which some hyperplane direction $D$ has a prescribed point count and two of the hyperplanes in $D$ contain caps in prescribed isomorphism classes, for example, a $16$-cap $4$-flat with an $\{8,3,5\}$ hyperplane direction in which the $8$- and $5$-cap $3$-flats are a saddled cube and a tetrahedron plus centre respectively. These lists of caps are in the supplementary files. Up to isomorphism, these lists contain all possibilities for certain prescribed hyperplane point counts where two of the hyperplanes contain caps in prescribed isomorphism classes, although some lists may contain duplicates up to isomorphism.

\begin{lemma}\label{n4s18m30}
Let $C$ be a $20$-, $19$-, or $18$-cap $4$-flat with point count $\{a,b,c\}$ for some $3$-flat direction $D$.
\begin{itemize}
\item[(a)] If $\{a,b,c\}$ is among $\{9,9,2\}$, $\{9,9,1\}$, and $\{9,8,2\}$, then each $9$-cap $3$-flat in $D$ is a square antiprism plus centre and each $8$-cap $3$-flat in $D$ is a square antiprism.
\item[(b)] If $\{a,b,c\}$ is among $\{8,6,6\}$, $\{8,6,5\}$, and $\{7,6,6\}$, then each $8$-, $7$-, $6$-, or $5$-cap $3$-flat in $D$ is respectively a cube, a cube minus point, a cube minus long diagonal, or a square pyramid.
\item[(c)] If $\{a,b,c\} = \{9,6,3\}$, then the $9$-cap $3$-flat in $D$ is a square antiprism plus centre and the $6$-cap $3$-flat in $D$ is a cube minus face diagonal (if $C$ is a $963A$) or a cube minus edge (otherwise).
\end{itemize}
\end{lemma}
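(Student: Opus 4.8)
The plan is to reduce each of (a), (b), (c) to the classification of small caps in $3$-flats from \cite{T21} together with the supplementary lists described above, using the hypothesis $|C|\ge 18$. First note that the size-$9$ assertions are automatic: there is a unique isomorphism class of $9$-cap $3$-flat, namely the square antiprism plus centre, so every $9$-cap $3$-flat occurring in a $\{9,9,2\}$, $\{9,9,1\}$, $\{9,8,2\}$, or $\{9,6,3\}$ direction is of that type; likewise the $3$-cap $3$-flat in a $\{9,6,3\}$ direction is the unique $3$-cap (a triangle). In particular (a) is complete once we handle the $8$-cap in a $\{9,8,2\}$ direction.

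For the remaining hyperplanes, of sizes $m\in\{8,7,6,5\}$, \cite{T21} lists the finitely many isomorphism classes of $m$-cap $3$-flat. Fix the claimed ``target'' class for each relevant $m$ (square antiprism for $m=8$ in (a); cube, cube minus point, cube minus long diagonal, and square pyramid for $m=8,7,6,5$ in (b); cube minus face diagonal or cube minus edge for $m=6$ in (c)). For every non-target class $Y$ of the appropriate size, the supplementary file contains the list of all $4$-caps possessing a hyperplane direction with the prescribed point count $\{a,b,c\}$ in which one hyperplane carries a cap in class $Y$ and (where needed) another hyperplane carries a cap in a class already pinned down; in every such list each $4$-cap has at most $17$ points. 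Since $|C|\ge 18$, no hyperplane of $D$ can carry a cap in a non-target class, so it carries a cap in the target class. To ensure that each appeal fixes at most two hyperplane isomorphism classes, the eliminations are ordered: for $\{8,6,6\}$ and $\{8,6,5\}$ first rule out a saddled-cube or square-antiprism $8$-cap (letting the smaller hyperplane's class range over all possibilities), then, with the $8$-cap known to be a cube, rule out each non-target $6$-cap, and finally for $\{8,6,5\}$ rule out each non-target $5$-cap; for $\{7,6,6\}$ rule out a non-target $7$-cap first and then the $6$-caps. For a direction with a repeated size the same argument shows that \emph{both} hyperplanes of that size lie in the target class, since if no list permits one of them to be in class $Y$ then neither can be.

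It remains, for (c), to carry out the dichotomy by whether $C$ is a $963A$. After the previous step the $6$-cap in a $\{9,6,3\}$ direction of $C$ is either a cube minus face diagonal or a cube minus edge; consulting the classification of $s$-cap $4$-flats with $s\ge 18$ in \cite{T21}, one checks that the $18$-or-larger caps with a $\{9,6,3\}$ direction whose $6$-cap is a cube minus face diagonal are exactly the $963A$'s and those whose $6$-cap is a cube minus edge are not $963A$'s, which gives the stated conclusion. Alternatively, since the $\ge 18$-caps are already classified in \cite{T21}, all of (a)--(c) can be verified by directly computing the isomorphism type of each hyperplane in each relevant direction of each such cap; the route through the supplementary lists is a repackaging of this.

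The main obstacle is the bookkeeping rather than any single deep step: one must verify the ``at most $17$ points'' bound in every relevant supplementary list — a direct but lengthy reading of the files — and must arrange the order of eliminations so that no appeal ever needs to prescribe all three hyperplane isomorphism classes at once. This is delicate for the $\{8,6,5\}$ direction, where the three sizes are distinct and the $8$-, $6$-, and $5$-caps must be pinned down in succession, and it requires the repeated-size observation above for the $\{9,9,2\}$, $\{9,9,1\}$, $\{8,6,6\}$, and $\{7,6,6\}$ directions.
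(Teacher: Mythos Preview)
Your ``alternative'' at the end is exactly the paper's proof: since all $18$-, $19$-, and $20$-cap $4$-flats are already classified in \cite{T21}, one simply inspects each representative and reads off the isomorphism type of every hyperplane in every relevant direction. That is the entire argument---one line.

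Your main route through the supplementary lists is more elaborate than needed and leans on an assumption the paper does not support. The text says those lists were generated for \emph{certain} prescribed hyperplane point counts (the example given is a $16$-cap $4$-flat with an $\{8,3,5\}$ direction whose $8$- and $5$-caps are a saddled cube and a tetrahedron plus centre), tailored for the later Section~\ref{secproofs41} searches; nothing guarantees that, say, a list of all $4$-caps with a $\{9,8,2\}$ direction whose $8$-cap is a cube exists in the files, let alone that it is empty at sizes $\ge 18$. So the assertion ``in every such list each $4$-cap has at most $17$ points'' is not something you can cite; you would have to manufacture those lists yourself, at which point you are doing strictly more work than the direct check. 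Even granting the lists, the ordered-elimination bookkeeping you outline (pin down the $8$-cap first, then the $6$-cap, then the $5$-cap, with a separate repeated-size argument) is overhead with no payoff: the direct inspection requires no new computation beyond reading the representatives already tabulated in \cite{T21}.
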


\begin{proof}
Check the representative caps in Thackeray \cite{T21} directly.
\end{proof}

\begin{lemma}[Point counts]\label{n4m3} Let $\{a,b,c\}$ be the point count of a $4$-dimensional cap $C$ for some $3$-flat direction of $C$ (not necessarily with $a \geq b \geq c$).
\begin{itemize}
\item[(a)] If $(a,b)$ is among $(9,9)$, $(9,8)$, $(9,7)$, and $(8,8)$, then $c \leq 2$.
\item[(b)] If $(a,b)$ is $(9,6)$ or $(8,7)$, then $c \leq 3$.
\item[(c)] If $(a,b)$ is $(9,5)$ or $(7,7)$, then $c \leq 4$.
\item[(d)] If $(a,b) = (9,4)$, then $c \leq 5$.
\end{itemize}
\end{lemma}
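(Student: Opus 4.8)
The plan is to reduce every case to the classification of large $4$-dimensional caps in Thackeray \cite{T21}. The first observation is that a failure of any of the inequalities in (a)--(d) makes $c$ exceed the stated bound by at least $1$, so $|C| = a + b + c$ is forced to be large: such a failure would give $|C| \geq 21$ when $(a,b) = (9,9)$, $|C| \geq 20$ when $(a,b) = (9,8)$, and $|C| \geq 19$ in every other case of (a)--(d) (for instance $\{9,7,3\}$, $\{8,8,3\}$, $\{9,6,4\}$, $\{8,7,4\}$, $\{9,5,5\}$, $\{7,7,5\}$, and $\{9,4,6\}$ each sum to $19$). Since no $4$-dimensional cap has size greater than $20$ --- this is part of the classification of $4$-caps of size at least $18$ in \cite{T21} --- the case $(a,b) = (9,9)$ needs nothing further, and every remaining case reduces to examining the finitely many $4$-caps of sizes $19$ and $20$ given in \cite{T21}.

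The next step is to run through each of those representative caps and, for each of its $40$ $3$-flat directions, record the point count $\{a,b,c\}$; this is the same kind of finite enumeration already carried out in \cite{T21} and in the proof of Lemma \ref{n4s17m3}. It then remains only to verify that none of the point counts that turn up is of the form $\{9,8,c\}$ with $c \geq 3$, $\{9,7,c\}$ with $c \geq 3$, $\{8,8,c\}$ with $c \geq 3$, $\{9,6,c\}$ with $c \geq 4$, $\{8,7,c\}$ with $c \geq 4$, $\{9,5,c\}$ with $c \geq 5$, $\{7,7,c\}$ with $c \geq 5$, or $\{9,4,c\}$ with $c \geq 6$; and by $|C| \leq 20$ this amounts to ruling out only finitely many multisets, each of total size $19$ or $20$. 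Since the number of caps and directions involved is small, this is a short check.

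The main obstacle here is bookkeeping rather than mathematics: making sure the size reduction really does confine the failing cases to sizes $19$ and $20$ (which the arithmetic above does), and reading the hyperplane point-count data off the \cite{T21} representatives without slips. A self-contained route that avoids the cap set bound in dimension $4$ is also available but harder. Identifying the three $3$-flats in the direction $D$ with $(\mathbb{Z}/3\mathbb{Z})^3$ and writing $A$, $B$ for the caps in the two outer $3$-flats, collinearity of the transversal lines forces the cap in the middle $3$-flat to be disjoint from $-(A+B)$, so $c \leq 27 - |A+B|$, and one would then need the sumset bounds $|A+B| \geq 27 - c$ for caps of sizes $a$ and $b$ in $(\mathbb{Z}/3\mathbb{Z})^3$. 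Kneser's theorem disposes of the subcases in which $A+B$ has non-trivial stabiliser --- using that a $9$-cap meets all three members of every parallel class of $2$-flats and that a cap meets every line in at most two points --- but when the stabiliser is trivial Kneser yields only $|A+B| \geq a + b - 1$, which is too weak, so one would still be forced to invoke the explicit list of $3$-caps from \cite{T21} to exclude the extremal sumsets. That is why the size-reduction argument is the cleaner one.
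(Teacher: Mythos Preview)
Your proposal is correct and follows essentially the same route as the paper: both reduce a failure of (a)--(d) to a $4$-cap of size $\geq 19$ and then invoke the classification in \cite{T21}. The paper's version is shorter only because \cite{T21} already records that the hyperplane point counts of a $20$-cap $4$-flat are $\{9,9,2\}$ and $\{8,6,6\}$ and those of a $19$-cap $4$-flat are $\{9,9,1\}$, $\{9,8,2\}$, $\{8,6,5\}$, and $\{7,6,6\}$, so the $40$-direction enumeration you propose has in effect already been done; your sumset digression is unnecessary but harmless.
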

\begin{proof}
From Thackeray \cite{T21}, the following results hold: $|C| \leq 20$; if $|C| = 20$, then each $3$-flat direction of $C$ has point count $\{9,9,2\}$ or $\{8,6,6\}$; and if $|C| = 19$, then each $3$-flat direction of $C$ has point count $\{9,9,1\}$, $\{9,8,2\}$, $\{8,6,5\}$, or $\{7,6,6\}$. At least one of those results is violated if (a), (b), (c), or (d) fails.
\end{proof}

\begin{lemma}[No 982/866, 982/865, 963/866, 963/865, and 963/766 clashes]\label{n5clash} No $3$-flat point count of a $5$-dimensional cap is among
\[\begin{array}{c}
\displaystyle \left(\begin{array}{ccc}
9 & 8 & 2\\
* & 6 & *\\
* & 6 & *
\end{array}\right),
\left(\begin{array}{ccc}
9 & 8 & 2\\
* & 6 & *\\
* & 5 & *
\end{array}\right),
\left(\begin{array}{ccc}
* & 8 & *\\
9 & 6 & 3\\
* & 6 & *
\end{array}\right),\\
\displaystyle \left(\begin{array}{ccc}
* & 8 & *\\
9 & 6 & 3\\
* & 5 & *
\end{array}\right)\textrm{, and }
\left(\begin{array}{ccc}
* & 7 & *\\
9 & 6 & 3\\
* & 6 & *
\end{array}\right).
\end{array}\]
\end{lemma}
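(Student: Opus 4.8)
The plan is to handle all five arrays by one device: exhibiting a single $3$-flat of the hypothetical $5$-dimensional cap that Lemma~\ref{n4s18m30} would force into two different isomorphism classes at once. So suppose $C=(S,F)$ is a $5$-dimensional cap one of whose $3$-flat point counts is the given $3\times 3$ array; thus $C$ has a $3$-flat direction corresponding to a $3$-dimensional linear subspace $U$ of translations, the nine cosets of $U$ are the parallel $3$-flats, and, indexing these $3$-flats by the points of the quotient plane $F/U\cong(\mathbb{Z}/3\mathbb{Z})^{2}$, the array entry in a cell is the number of cap points in the corresponding $3$-flat. A line $\ell$ of this quotient plane --- in particular any row or any column of the array --- pulls back under the quotient map $\pi$ to a $4$-flat $\pi^{-1}(\ell)$ of $F$; its cap size is the sum of the three cells on $\ell$, and $U$ induces a $3$-flat direction of this $4$-flat whose point count is the unordered triple of those three cell values. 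Only the row and column $4$-flats are used below.

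For each array I would now isolate the \emph{pivot} $3$-flat and check that Lemma~\ref{n4s18m30} applies to the two $4$-flats through it. For the first two arrays the pivot is the $8$-cap $3$-flat in the cell where the top row $(9,8,2)$ meets the middle column; that column is $(8,6,6)$ in the first array and $(8,6,5)$ in the second, so the row $4$-flat has size $19$ and the column $4$-flat has size $20$ or $19$, all within $\{18,19,20\}$. By Lemma~\ref{n4s18m30}(a), read off the row direction with point count $\{9,8,2\}$, the pivot is a square antiprism; by Lemma~\ref{n4s18m30}(b), read off the column direction with point count $\{8,6,6\}$ or $\{8,6,5\}$, the pivot is a cube. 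For the last three arrays the pivot is the $6$-cap $3$-flat in the cell where the middle row $(9,6,3)$ meets the middle column; that column is $(8,6,6)$, $(8,6,5)$, or $(7,6,6)$, so the row $4$-flat has size $18$ and the column $4$-flat has size $20$ or $19$. By Lemma~\ref{n4s18m30}(c), read off the row direction with point count $\{9,6,3\}$, the pivot is a cube minus face diagonal or a cube minus edge; by Lemma~\ref{n4s18m30}(b), read off the column direction, the pivot is a cube minus long diagonal.

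Finally I would invoke the classification of $3$-dimensional caps in Thackeray \cite{T21}: the cube and the square antiprism are non-isomorphic $8$-caps, and the cube minus long diagonal, cube minus face diagonal, and cube minus edge are pairwise non-isomorphic $6$-caps. In each of the five cases the pivot $3$-flat would then have to lie in two distinct isomorphism classes, a contradiction; so none of the five arrays occurs as a $3$-flat point count of a $5$-dimensional cap. I do not expect a genuine obstacle here: the only real care needed is bookkeeping --- choosing the right row and column in each array, matching each to the correct clause of Lemma~\ref{n4s18m30}, and confirming the two $4$-flat sizes lie in $\{18,19,20\}$ --- after which the argument is just a collision of forced isomorphism types.
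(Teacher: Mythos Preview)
Your proof is correct and follows exactly the paper's approach: in each of the five arrays, a single pivot $3$-flat is forced by Lemma~\ref{n4s18m30} (via the row $4$-flat and the column $4$-flat through it) into two incompatible isomorphism classes---a square antiprism and a cube for the first two arrays, and a cube minus long diagonal versus a cube minus face diagonal or cube minus edge for the last three. The paper's proof records this in one sentence; your write-up simply unpacks the bookkeeping in more detail.
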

\begin{proof}
By Lemma \ref{n4s18m30}, each of the first two point counts above yields an $8$-cap $3$-flat that is both a cube and a square antiprism, and each other point count above yields a $6$-cap $3$-flat that both is and is not a cube minus long diagonal.
\end{proof}

\begin{lemma}[No 865/865 clashes]\label{n5865865}
No $3$-flat point count of a $5$-dimensional cap is
\[\left(\begin{array}{ccc}
8 & * & 5\\
1 & 6 & *\\
8 & * & 5
\end{array}\right)
\textrm{ or }
\left(\begin{array}{ccc}
8 & * & 6\\
1 & 5 & *\\
8 & * & 6
\end{array}\right).\]
\end{lemma}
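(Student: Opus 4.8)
The plan is to read either displayed array as the table of cap-point counts of a $3\times3$ family of $3$-flats inside a $5$-flat $F$. Choose two coordinates $u,v$ of $F$, identify $\{-1,0,1\}$ with $\mathbb{Z}/3\mathbb{Z}$, and let the $(i,j)$ entry be the number of cap points in the $3$-flat $\{u=i,\ v=j\}$. Each of the twelve lines of $AG(2,3)$ in the $(u,v)$-plane lifts to a $4$-flat of $F$ whose point count, in the hyperplane direction coming from that line, is the triple of array entries along the line; in particular the rows, the columns and the two ``diagonals'' (the $4$-flats $\{u=v\}$ and $\{u=-v\}$) all have hyperplane point counts that can be read off the array. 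As in Thackeray \cite{T21} and in Lemma \ref{n5clash}, the cap condition for such a $4$-flat becomes, for lines meeting all three of its parallel $3$-flats, the condition that one of the three $3$-flats avoid (the negative of) the sumset of the other two, regarded inside $(\mathbb{Z}/3\mathbb{Z})^3$.

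First I would exploit the two diagonals. In both arrays each diagonal reads $\{8,6,5\}$, so $\{u=v\}$ and $\{u=-v\}$ are $19$-cap $4$-flats with an $\{8,6,5\}$ hyperplane direction. By Lemma \ref{n4s18m30}(b), in such a direction the $8$-cap $3$-flat is a cube, the $6$-cap $3$-flat is a cube minus long diagonal, and each $5$-cap $3$-flat is a square pyramid. Since the two entries equal to $8$ lie one on each diagonal, the $3$-flats they record — call them $A$ and $B$ — are both cubes, and the central $3$-flat together with the two remaining $3$-flats on each diagonal are likewise pinned down (a cube minus long diagonal, resp.\ square pyramids, in the first array; with a square pyramid in the centre instead in the second). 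Next, the column through the two $8$'s reads $\{8,1,8\}$, so it is a $17$-cap $4$-flat whose $\{8,8,1\}$ hyperplane direction has $3$-flats $A$, a single cap point $q$, and $B$; the cap condition for lines crossing all three parallel $3$-flats is $a+q+b\neq 0$ for all $a\in A,\ b\in B$, i.e.\ $-q\notin A+B$ in $(\mathbb{Z}/3\mathbb{Z})^3$.

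It would be tempting to conclude immediately by asserting that $A+B=(\mathbb{Z}/3\mathbb{Z})^3$ for any two cubes, but this is false — one checks that two cubes in $(\mathbb{Z}/3\mathbb{Z})^3$ can even be disjoint — so the condition $-q\notin A+B$ is not by itself contradictory, and the relative position of $A$, $B$, $q$, together with the rest of the grid, has to be used. I would therefore combine $-q\notin A+B$ with two further inputs: the supplementary computer-generated list of $17$-cap $4$-flats whose $\{8,8,1\}$ hyperplane direction has both $8$-caps equal to cubes, which reduces the data $(A,B,q)$ to a short finite list; and the nine remaining lines of $AG(2,3)$, each giving a sumset condition among the three $3$-flats on it — these involve the three still-undetermined $3$-flats (whose sizes are the $*$'s) as well as the already-identified cube minus long diagonal (or square pyramid) and square pyramids, and in particular the two ``slope'' lines through $q$ force $-q\notin G+M'$ and $-q\notin M+G'$ for the relevant square pyramids $G,G'$ and undetermined caps $M,M'$. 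A finite case check — of the same kind as the searches that produced the supplementary lists — running over the short list of $(A,B,q)$ and over all possibilities for the undetermined $3$-flats, should show that no assignment satisfies every condition; the identical computation disposes of the second array, which differs from the first only by interchanging the roles of a few $5$- and $6$-entries.

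The hard part is precisely that the clean shortcut via ``two cubes sum to everything'' is unavailable, so one must enumerate the cube--cube $\{8,8,1\}$ $17$-cap $4$-flats and eliminate every residual completion by a finite computer search rather than by a single observation; a subsidiary difficulty is that the middle row and column of the array are not determined in advance, so keeping the enumeration finite and exhaustive may call for a further supplementary list of $4$-flats having a prescribed $\{8,\ast,5\}$- or $\{1,\ast,\ast\}$-type direction with a cube, square-pyramid, or cube-minus-long-diagonal $3$-flat among the hyperplanes.
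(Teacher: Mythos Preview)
Your setup is right up to the point where you identify the two $8$-cap $3$-flats $A$ and $B$ as cubes via Lemma~\ref{n4s18m30}(b), and you correctly frame the left column as a $\{8,1,8\}$ constraint $-q\notin A+B$. But you then discard the direct route for the wrong reason and retreat to a computer enumeration that is not needed.

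The observation you are missing is that the two cubes are not arbitrary relative to one another: the two diagonal $19$-cap $4$-flats $x_{1}=x_{2}$ and $x_{1}=-x_{2}$ share the \emph{same} central $3$-flat at $(x_{1},x_{2})=(0,0)$. In the (unique up to isomorphism) $19$-cap $4$-flat, every $\{8,6,5\}$ hyperplane direction arises from an $\{8,6,6\}$ direction of the $20$-cap $4$-flat by deleting one point, and in that $\{8,6,6\}$ direction the cube, the cube minus long diagonal, and the other cube minus long diagonal are all translates of one another. Hence the cube in an $\{8,6,5\}$ direction is a translate of the parent cube of the central $6$-cap (first array) or of the central $5$-cap (second array). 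Since both diagonals pin their cube against the \emph{same} central $3$-flat, the cubes at $(x_{1},x_{2})=(-1,-1)$ and $(-1,1)$ are translates of each other. For translates, $A+B$ is a translate of $A+A$, and for a cube one checks $A+A=(\mathbb{Z}/3\mathbb{Z})^{3}$; so every point of the $3$-flat $(x_{1},x_{2})=(-1,0)$ is the third point of a line through a point of $A$ and a point of $B$, contradicting the entry $1$ there. That is exactly the paper's two-sentence proof.

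So your proposed finite case check over $(A,B,q)$ and the undetermined $*$-entries is an overkill substitute for a single structural fact about the $19$-cap $4$-flat; it would probably succeed, but it replaces a one-line contradiction with an open-ended enumeration whose termination you only assert (``should show''). The genuine gap is not logical but conceptual: you treated $A$ and $B$ as independent cubes coupled only through the $\{8,8,1\}$ column, whereas the two $\{8,6,5\}$ diagonals already force $B$ to be a translate of $A$.
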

\begin{proof}
If one of the matrices above is the point count of some $5$-dimensional cap for some independent pair $(x_{1},x_{2})$ of co-ordinates, then the structure of the $19$-cap $4$-flats $x_{1} = \pm x_{2}$ and their $3$-flat directions with point count $\{8,6,5\}$ implies that the cubes $(x_{1},x_{2}) = (-1,\pm 1)$ are translations of each other, so the $1$-cap $3$-flat $(x_{1},x_{2}) = (-1,0)$ has no cap points.
\end{proof}

For any nonnegative integer $n$, any $n$-flat $F'$, and any cap $(S,F)$ such that $F'$ contains $F$, a \emph{point reflection} of $(S,F)$ (\emph{in} $F'$) is a cap $(\widetilde{S},\widetilde{F})$ such that for some point $O$ in $F'$, it is true that $\widetilde{S}$ and $\widetilde{F}$ are the images of $S$ and $F$ respectively under the affine transformation of $F'$ defined as follows: $O$ is sent to $O$, and each point $P$ in $F' - \{O\}$ is sent to the point $Q$ in $F' - \{O\}$ such that the midpoint of $PQ$ is $O$.

\begin{lemma}[No triangles of 919 edges and 919/928 triangles] \label{n5no919928}
No $3$-flat point count of a $5$-dimensional cap is of the form
\[\left(\begin{array}{ccc}
9 & 1 & 9\\
1 & 1 & *\\
9 & * & *
\end{array}\right)
\textrm{ or }
\left(\begin{array}{ccc}
9 & 1 & 9\\
2 & 2 & *\\
8 & * & *
\end{array}\right).\]
These forbidden point counts are a \emph{triangle of $919$ edges} and a \emph{$919/928$ triangle} respectively.
\end{lemma}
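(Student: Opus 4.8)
The plan is to show, in each case, that the hypothesis forces three hyperplanes of the $5$-flat to be $19$-cap $4$-flats that overlap pairwise in three $3$-flats $F_{A},F_{B},F_{C}$ — the ``vertices'' of the triangle — and then to contradict the rigidity of $19$-cap $4$-flats together with the fact that no $9$-cap $3$-flat is invariant under a point reflection. Write $(x_{1},x_{2})$ for the coordinate pair in question. The matrix says that three lines in the $(x_{1},x_{2})$-plane, forming a triangle, each carry $3$-flat counts $9,1,9$ (first matrix) or one carries $9,1,9$ and two carry $9,8,2$ (second matrix); after a change of the pair $(x_{1},x_{2})$ we may take these lines to be $\{x_{1}=-1\}$, $\{x_{2}=-1\}$, $\{x_{1}+x_{2}=0\}$. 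The corresponding hyperplanes $H_{1},H_{2},H_{3}$ each have $9+1+9=19$ cap points, so they are $19$-cap $4$-flats; their pairwise intersections are the vertex $3$-flats $F_{A}=H_{1}\cap H_{2}$, $F_{B}=H_{1}\cap H_{3}$, $F_{C}=H_{2}\cap H_{3}$, with cap counts $9,9,9$ (first matrix) or $9,9,8$ (second), and each sits in the hyperplane(s) containing it in a $\{9,9,1\}$ or $\{9,8,2\}$ direction. By Lemma~\ref{n4s18m30}(a), in the first case $F_{A},F_{B},F_{C}$ are each a square antiprism plus centre, and in the second case $F_{A}$ and $F_{B}$ are each a square antiprism plus centre while $F_{C}$ is a square antiprism.

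The engine uses three facts about the unique $9$-cap of a $3$-flat over $\mathbb{Z}/3\mathbb{Z}$; fix a coordinate identification of such a $3$-flat with $(\mathbb{Z}/3\mathbb{Z})^{3}$, and write $\Sigma$ for the $9$-cap. (i) $\Sigma+\Sigma=(\mathbb{Z}/3\mathbb{Z})^{3}$, whereas $\Sigma-\Sigma$ omits exactly two mutually opposite points; consequently, if $9$-caps $X,Y$ satisfy $X+Y\neq(\mathbb{Z}/3\mathbb{Z})^{3}$ then $Y$ is a point reflection of $X$ — for then a translate of $-Y$ is disjoint from $X$, and two disjoint $9$-caps are parallel translates of one another (a short counting argument, or a consequence of the classification in \cite{T21}). (ii) No $9$-cap is invariant under a point reflection $\tau$: if $\tau(\Sigma)=\Sigma$, comparing the pure-quadratic part of the function whose graph is $\Sigma$ with that of its image under $\tau$ forces $-1=1$ in $\mathbb{Z}/3\mathbb{Z}$. (iii) A square antiprism, viewed as an $8$-cap, extends to a $9$-cap in exactly one way (a short secant count), so ``the $9$-cap $\widehat{F}_{C}$ completing $F_{C}$'' is well defined.

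To finish, fix a single coordinate system on the common translation $3$-space of the parallel $3$-flats $F_{A},F_{B},F_{C}$ (and $\widehat{F}_{C}$), identifying each with $(\mathbb{Z}/3\mathbb{Z})^{3}$; point reflections then have the form $x\mapsto -x+c$. In a $\{9,9,1\}$ direction of a $19$-cap $4$-flat the middle $3$-flat holds a single cap point, and the requirement that no line through it meets both $9$-cap $3$-flats in cap points says exactly that the negative of that point is absent from the sumset of the two $9$-caps; by (i) the two $9$-caps are point reflections of each other. For the first matrix this holds in $H_{1}$, $H_{2}$, $H_{3}$, giving $F_{B}=-F_{A}+\beta$, $F_{C}=-F_{A}+\gamma$, and $F_{B}=-F_{C}+\delta$; eliminating yields $F_{A}=-F_{A}+(\beta+\gamma-\delta)$, so $F_{A}$ is point-reflection-invariant, contradicting (ii). For the second matrix, $H_{1}$ (a $\{9,9,1\}$ hyperplane) gives $F_{B}=-F_{A}+\beta$, and Lemma~\ref{n4s18m30}(a) together with the structure of $19$-cap $4$-flats in a $\{9,8,2\}$ direction — in which the $9$-cap and the completion of the square-antiprism $8$-cap are point reflections of each other — gives $\widehat{F}_{C}=-F_{A}+\gamma$ and $\widehat{F}_{C}=-F_{B}+\delta$; eliminating again yields $F_{A}=-F_{A}+(\beta+\gamma-\delta)$, contradicting (ii).

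I expect the main obstacle to be pinning down the rigidity of $19$-cap $4$-flats precisely enough to conclude that the relevant (completed) caps must be point reflections of one another. For a $\{9,9,1\}$ direction this follows cleanly from the sumset identities in (i). For a $\{9,8,2\}$ direction it requires the more detailed information about how a square-antiprism $8$-cap and a $9$-cap coexist inside a $19$-cap $4$-flat, which is where one relies on the computer-assisted classification of $4$-dimensional caps in \cite{T21} and the accompanying supplementary files.
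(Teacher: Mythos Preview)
Your argument is correct and follows essentially the same route as the paper's: the three $19$-cap $4$-flats force the corner $9$-caps (after completing the $8$-cap in the $919/928$ case) to be pairwise point reflections of one another, and composing three point reflections exhibits one $9$-cap as its own point reflection, which is impossible. The paper simply lifts the point-reflection rigidity in $\{9,9,1\}$ and $\{9,8,2\}$ directions from the classification in \cite{T21} and argues non-self-reflectivity via the $\{3,3,3\}$ hyperplane directions of the $9$-cap, whereas you supply independent sumset and quadratic-graph arguments for these supporting facts.
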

\begin{proof}
In each $\{9,9,1\}$ hyperplane direction of an arbitrary $19$-cap $4$-flat, each of the two $9$-cap $3$-flats is a point reflection of the other. For each $\{9,8,2\}$ hyperplane direction $D$ of an arbitrary $19$-cap $4$-flat, there is a unique way to enlarge the $8$-cap $3$-flat in $D$ to a $9$-cap $3$-flat by adding a ninth cap point, and that new $9$-cap $3$-flat is a point reflection of the $9$-cap $3$-flat in $D$.

Suppose that the point count of some $5$-dimensional cap for some independent pair $(x_{1},x_{2})$ of co-ordinates is a triangle of $919$ edges. The $9$-cap $3$-flat $(x_{1},x_{2}) = (-1,1)$ is a point reflection of the $9$-cap $3$-flat $(x_{1},x_{2}) = (-1,-1)$, which is a point reflection of $(x_{1},x_{2}) = (1,1)$, which is a point reflection of $(x_{1},x_{2}) = (-1,1)$. Therefore, the $9$-cap $3$-flat $(x_{1},x_{2}) = (-1,1)$ is a point reflection of itself -- but it is not: in each of its $\{3,3,3\}$ hyperplane directions, the three $3$-cap $2$-flats are translations of one another, but no $3$-cap $2$-flat is a point reflection of itself.

For a $919/928$ triangle, replace the $8$-cap $3$-flat $(x_{1},x_{2}) = (-1,-1)$ with its unique completion to a $9$-cap $3$-flat, and apply the same argument.
\end{proof}

\section{Computer searches for some 5-dimensional caps}

We consider some large $5$-dimensional caps. In particular, we consider a $45$-cap $5$-flat, a $42$-cap $5$-flat that we call the representative $\Delta{}686$, and five $41$-cap $5$-flats that we call the representative $41A$, the representative $41B$, the representative $41C$, the representative $41D$, and the representative $41E$. A $\Delta{}686$ (respectively a $41A$, a $41B$, \ldots, a $41E$) is defined to be an arbitrary cap that is isomorphic to the representative $\Delta{}686$ (respectively the representative $41A$, and so on).

\begin{lemma}[Large 5D caps]\label{n5m4largesmall}
Let $C$ be a $5$-dimensional cap.
\begin{itemize}
\item[(a)] Let the co-ordinate $x_{1}$ of $C$ be such that each $4$-flat $x_{1} = a$ has exactly $c_{a}$ cap points, for $a$ in $\{-1,0,1\}$. Suppose $c_{-1} \geq 19$ and $c_{1} \geq 18$. It follows that $|C| \leq 41$, and if also $(c_{-1},c_{1}) \neq (20,20)$, then $|C| \leq 40$.
\end{itemize}
For parts (b) to (f), let the co-ordinate $x_{1}$ of $C$ be such that the $4$-flats $x_{1} = \pm 1$ have exactly $18$ cap points each.
\begin{itemize}
\item[(b)] We have the following. (i) If the $18$-cap $4$-flats $x_{1} = \pm 1$ are both $882A_{1}$ caps or both $882A_{2}$ caps, then $|C| \leq 45$. (ii) If those $18$-cap $4$-flats are both $963B$ caps, then $|C| \leq 42$. (iii) If those $18$-cap $4$-flats are two $990A_{1}$ caps, or a $990A_{2}$ and an $882A_{2}$, or two $981A$ caps, or two $972A$ caps, or a $954A$ and an $882A_{1}$, or a $981I$ and an $882A_{2}$, then $|C| \leq 41$. (iv) If they are in none of cases (i) to (iii), then $|C| \leq 40$.
\item[(c)] Let the $18$-cap $4$-flats $x_{1} = \pm 1$ be two $882A_{1}$ caps. Suppose $|C| \geq 41$. There are co-ordinates $x_{2}$ to $x_{5}$, with $(x_{1},\ldots,x_{5})$ independent, such that, possibly after negating $x_{1}$, all cap points of $C$ are among the cap points in Figure \ref{fign5s45882A1}. Also, each of the nine $\{8,8,2\}$ hyperplane directions of each $4$-flat $x_{1} = \pm 1$ is parallel to one of the nine $\{8,5,5\}$ hyperplane directions of the other $4$-flat $x_{1} = \mp 1$.

\begin{figure}[tbph]
\centering
\includegraphics{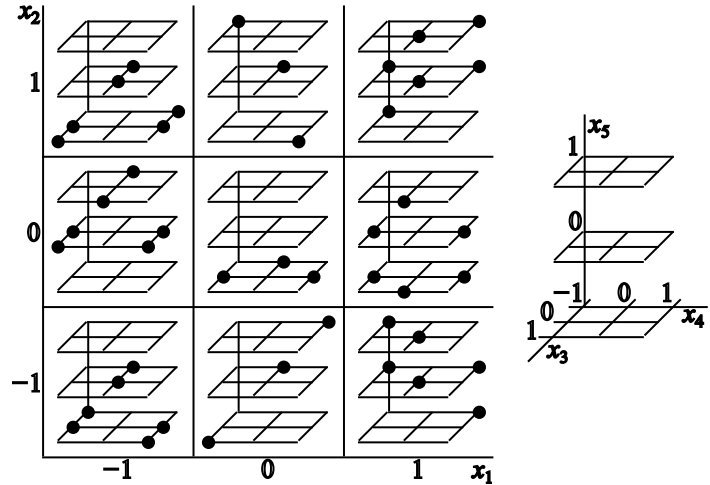}
\caption{A $45$-cap $5$-flat.\label{fign5s45882A1}}
\end{figure}

\item[(d)] Let the $18$-cap $4$-flats $x_{1} = \pm 1$ be two $882A_{2}$ caps. Suppose $|C| \geq 42$. There are co-ordinates $x_{2}$ to $x_{5}$, with $(x_{1},\ldots,x_{5})$ independent, such that all cap points of $C$ are among the cap points in Figure \ref{fign5s45m0}. That figure is the image of Figure \ref{fign5s45882A1} under the transformation
\[(x_{1},\ldots,x_{5}) \mapsto (1 - x_{3} - x_{5},x_{5} - x_{3},x_{1} + x_{3} - x_{5},x_{2},-x_{4}).\]

\begin{figure}[tbph]
\centering
\includegraphics{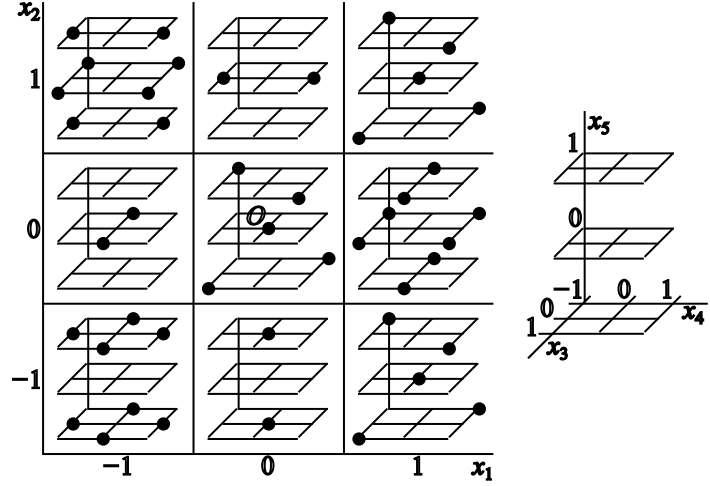}
\caption{The representative $45$-cap $5$-flat.\label{fign5s45m0}}
\end{figure}

\item[(e)] Let the $18$-cap $4$-flats $x_{1} = \pm 1$ be two $882A_{2}$ caps. Suppose $41 \leq |C| \leq 44$. Suppose that $C$ is complete. It follows that $|C| = 41$ and there are co-ordinates $x_{2}$ to $x_{5}$, with $(x_{1},\ldots,x_{5})$ independent, such that $C$ is the representative $41A$ in Figure \ref{fign5s41A}.

\begin{figure}
\centering
\includegraphics{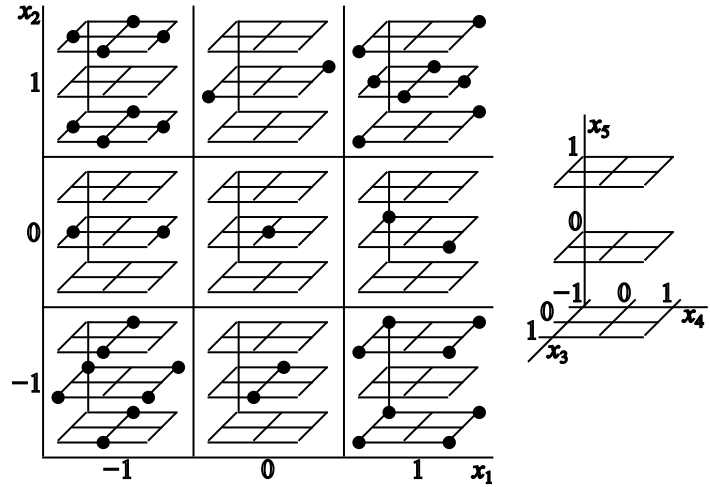}
\caption{The representative $41A$.\label{fign5s41A}}
\end{figure}

\item[(f)] Suppose $|C| \geq 41$. Let the $18$-cap $4$-flats $x_{1} = -1$ and $x_{1} = 1$ be (i) two $963B$ caps, (ii) two $981A$ caps, (iii) a $990A_{2}$ and an $882A_{2}$ respectively, (iv) a $954A$ and an $882A_{1}$ respectively, or (v) a $981I$ and an $882A_{2}$ respectively. It follows that there are co-ordinates $x_{2}$ to $x_{5}$, with $(x_{1},\ldots,x_{5})$ independent, such that $C$ is respectively (i) the representative $\Delta{}686$ in Figure \ref{fign5s42Delta686} or that cap minus one cap point, (ii) the representative $41B$ in Figure \ref{fign5s41B} or its image under $(x_{1},\ldots,x_{5}) \mapsto (-x_{1},x_{2},x_{3},x_{4},x_{5})$, (iii) the representative $41C$ in Figure \ref{fign5s41C}, (iv) the representative $41D$ in Figure \ref{fign5s41D}, or (v) the representative $41E$ in Figure \ref{fign5s41E}.

\begin{figure}
\centering
\includegraphics{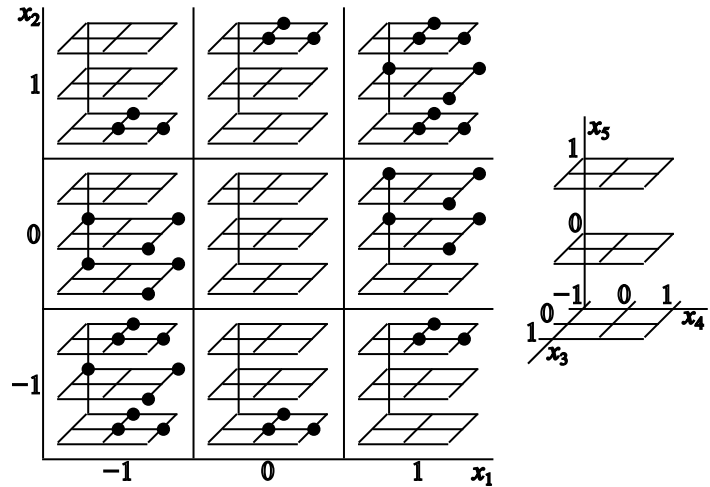}
\caption{The representative $\Delta{}686$.\label{fign5s42Delta686}}
\end{figure}

\begin{figure}
\centering
\includegraphics{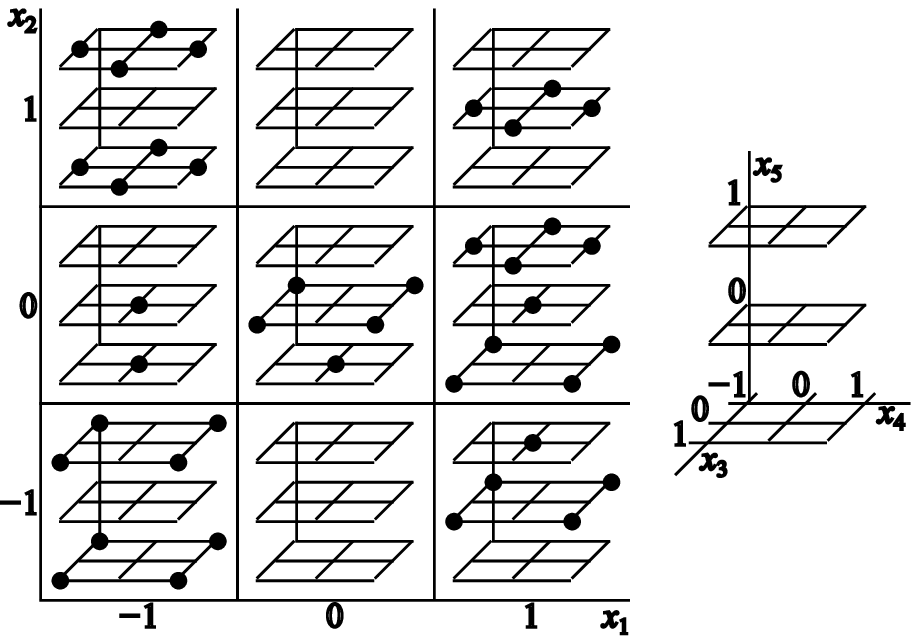}
\caption{The representative $41B$.\label{fign5s41B}}
\end{figure}

\begin{figure}
\centering
\includegraphics{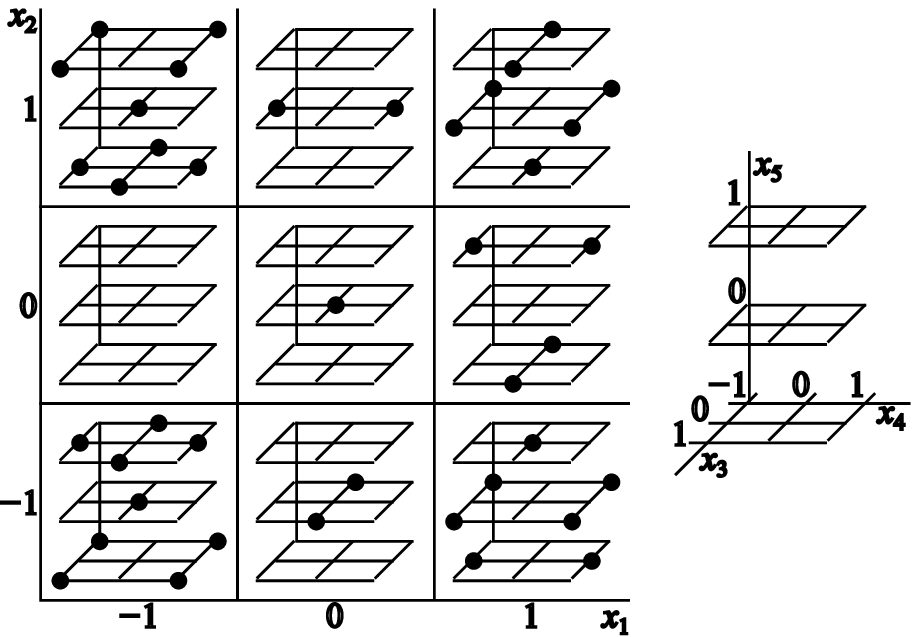}
\caption{The representative $41C$.\label{fign5s41C}}
\end{figure}

\begin{figure}
\centering
\includegraphics{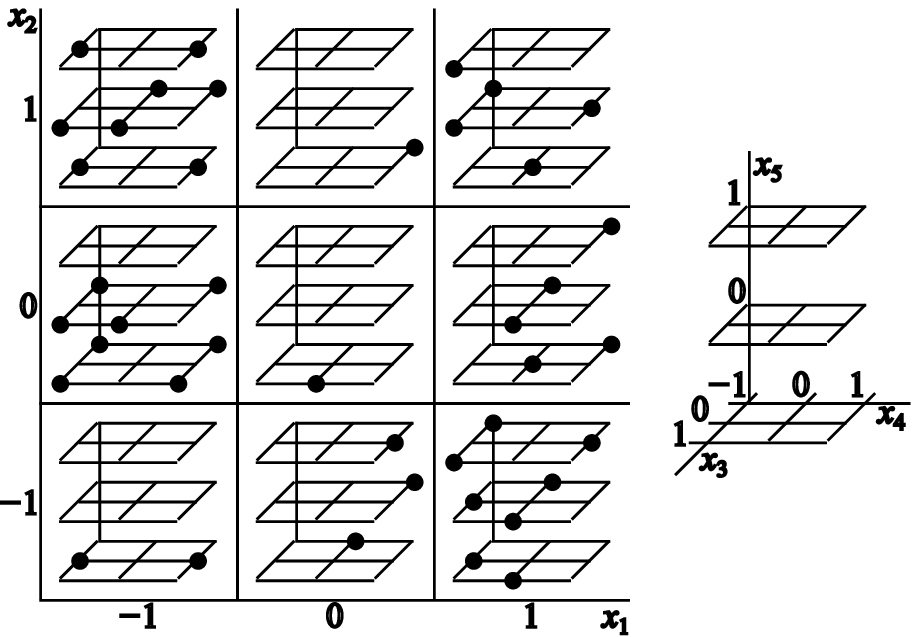}
\caption{The representative $41D$.\label{fign5s41D}}
\end{figure}

\begin{figure}
\centering
\includegraphics{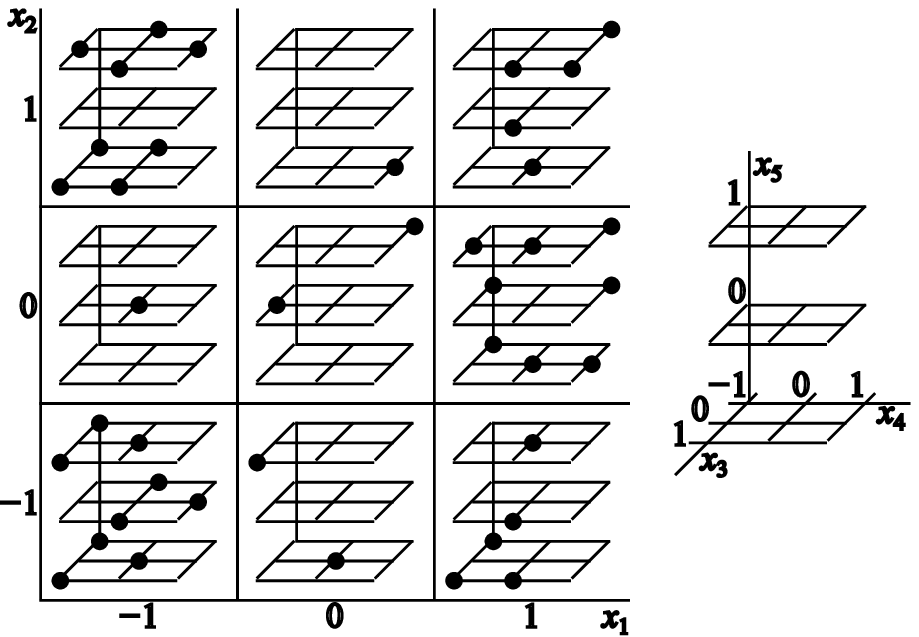}
\caption{The representative $41E$.\label{fign5s41E}}
\end{figure}

\item[(g)] Suppose $|C| \geq 41$. Let the co-ordinate $x_{2}$ of $C$ be such that the $4$-flats $x_{2} = \pm 1$ have exactly $18$ cap points each, and the $18$-cap $4$-flats $x_{2} = \pm 1$ are (i) two $990A_{1}$ caps or (ii) two $972A$ caps. It follows that $|C| = 41$ and there are co-ordinates $x_{1}$, $x_{3}$, $x_{4}$, and $x_{5}$ such that $C$ is respectively (i) the $41A$ in Figure \ref{fign5s41A} or (ii) the $41C$ in Figure \ref{fign5s41C}.
\end{itemize}
\end{lemma}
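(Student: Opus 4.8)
The plan is to slice $C$ along the distinguished co-ordinate -- $x_{1}$ in parts (a)--(f), and $x_{2}$ in part (g), which for this discussion I also call $x_{1}$ -- into the three $4$-flats $x_{1} = a$ for $a \in \{-1, 0, 1\}$, exactly as in the proof of Lemma \ref{n4s17m3}. Write $A_{a}$ for the set of cap points of $C$ in the $4$-flat $x_{1} = a$, so $|C| = |A_{-1}| + |A_{0}| + |A_{1}|$. A collinear triple of the $5$-flat not contained in one of these three $4$-flats meets each of them exactly once, in points with $x_{1}$-co-ordinates $-1$, $0$, $1$; such a triple consists of two points together with the midpoint of the segment they span. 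Hence $A_{0}$ is exactly a cap of the $4$-flat $x_{1} = 0$ that contains no midpoint of a segment from a point of $A_{-1}$ to a point of $A_{1}$ (the \emph{forbidden midpoints}), and conversely any cap of $x_{1} = 0$ avoiding every forbidden midpoint makes $A_{-1} \cup A_{0} \cup A_{1}$ the point set of a cap of the $5$-flat. So every assertion reduces to the finite problem: for $A_{-1}$ and $A_{1}$ in prescribed isomorphism classes and in a prescribed relative position, find the largest cap of the $81$-point $4$-flat $x_{1} = 0$ avoiding all forbidden midpoints, and list the caps realising the relevant sizes.

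For part (a): since $|A_{-1}| \geq 19$, the $4$-flat $x_{1} = -1$ is a $19$- or $20$-cap, and since $|A_{1}| \geq 18$, the $4$-flat $x_{1} = 1$ is an $18$-, $19$-, or $20$-cap; all of these are classified up to isomorphism in \cite{T21}. Fixing a representative for $A_{-1}$, there are only finitely many positions for $A_{1}$ (relatively few modulo the automorphism group of $A_{-1}$), so for each pair of classes a computer forms the forbidden-midpoint set and computes the largest cap of $x_{1} = 0$ avoiding it; the $3$-flat point-count restrictions of Lemmas \ref{n4m3}, \ref{n5clash}, \ref{n5865865}, and \ref{n5no919928}, applied to the $3$-flat point count of $C$ in co-ordinates $(x_{1}, x_{2})$ for any $x_{2}$ independent of $x_{1}$, discard most positions immediately. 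The outcome is $|A_{0}| \leq 1$ whenever $|A_{-1}| = |A_{1}| = 20$, so $|C| \leq 41$ there, while $|A_{-1}| + |A_{0}| + |A_{1}| \leq 40$ in every other case, which is part (a).

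For parts (b)--(g) the bounding $4$-flats have exactly $18$ cap points each, and \cite{T21} lists the isomorphism classes of $18$-cap $4$-flats. For each unordered pair of classes one fixes a representative of one class, runs over the orbits (under its automorphism group) of the $4$-flats of the other class, forms the forbidden-midpoint set, and enumerates all caps of the middle $4$-flat avoiding it. Part (b) is the resulting table of maxima of $|C| = 36 + |A_{0}|$: the only pairs reaching $|A_{0}| = 9$ are two $882A_{1}$ caps and two $882A_{2}$ caps; the only pair reaching $6$ but not $9$ is two $963B$ caps; exactly the six pairs listed in (b)(iii) reach $5$ but not $6$; and every remaining pair has $|A_{0}| \leq 4$. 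For parts (c), (f), and (g) the same searches also return the caps: when $|C| \geq 41$ (so $|A_{0}| \geq 5$), the cap $C$ is, after the stated change of co-ordinates and up to the explicit symmetries listed in each part, equal to the single displayed cap -- or, in (c) and in (f)(i), a sub-configuration of it -- namely the $45$-cap of Figure \ref{fign5s45882A1} in (c), and the representative $\Delta{}686$, $41B$, $41C$, $41D$, $41E$ of Figures \ref{fign5s42Delta686}--\ref{fign5s41E} in the five sub-cases of (f); the parallelism statement of (c) is then read off the co-ordinates in Figure \ref{fign5s45882A1}. Part (d) is the analogous computation for two $882A_{2}$ caps: the search finds that every such cap $C$ with $|C| \geq 42$ is contained in a single $45$-cap, which, as the statement records, is the image of the $45$-cap of part (c) under the displayed affine map (carrying a pair of $882A_{1}$ bounding $4$-flats to a pair of $882A_{2}$ ones), hence Figure \ref{fign5s45m0}; the threshold is $42$ rather than $41$ because, for two $882A_{2}$ caps, there are $41$-caps not contained in that $45$-cap, and these are the content of part (e). Part (e) collects exactly those: the $882A_{2}$-$882A_{2}$ search shows that the caps $C$ with $41 \leq |C| \leq 44$ not contained in the $45$-cap all have $|C| = 41$, and among them the unique complete one up to isomorphism is the $41A$ of Figure \ref{fign5s41A}. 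Part (g) is the analogous search for the pairs (two $990A_{1}$ caps) and (two $972A$ caps) in the $x_{2}$-direction: by (b)(iii) each forces $|A_{0}| \leq 5$, and the configurations with $|A_{0}| = 5$ are copies of the $41A$ of Figure \ref{fign5s41A} and the $41C$ of Figure \ref{fign5s41C} respectively, in agreement with parts (e) and (f)(iii), to which (g) may alternatively be reduced by identifying the $x_{1} = \pm 1$ hyperplanes of such a cap $C$.

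The main obstacle is making the searches of parts (a)--(g) provably exhaustive while keeping them feasible: a priori the two bounding caps can be in as many as on the order of $10^{9}$ relative positions, so fixing one of them, quotienting by its automorphism group, and using the point-count lemmas to discard at once the (overwhelmingly typical) positions for which the forbidden midpoints exhaust all $81$ points are all essential, and the real work is certifying that these reductions lose no case and that the enumerations of caps of the middle $4$-flat are complete; one also checks directly that each displayed figure is a cap with the claimed bounding $4$-flats. Part (a) carries the extra burden of bookkeeping over all $3 \times 3$ point-count matrices, including every permutation and reflection of the forbidden matrices in Lemmas \ref{n5clash}, \ref{n5865865}, and \ref{n5no919928}.
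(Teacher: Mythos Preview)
Your approach is essentially the paper's: for each pair of isomorphism classes of the outer $4$-flats, fix a representative for one, loop over all relative positions of the other (the paper simply loops over every invertible linear map $T$ applied to a fixed representative, recording the outcomes in Table~\ref{tblcompsearch}), and compute by brute force the largest cap of the middle $4$-flat avoiding every forbidden midpoint. The one point of divergence is that the paper does \emph{not} invoke Lemmas \ref{n4m3}--\ref{n5no919928} anywhere in this proof --- no $3$-flat point-count pruning is used or needed for parts (a)--(g), so your final paragraph overstates the bookkeeping required; those lemmas only enter later, in the proof of Proposition~\ref{n5s41201716}.
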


\begin{proof}
(a) It is enough to show that $(c_{-1},c_{0},c_{1})$ is not among $(20,2,19)$, $(20,3,18)$, $(19,3,19)$, and $(19,4,18)$.

For each of these four $(c_{-1},c_{0},c_{1})$ values, the cap in the $4$-flat $x_{1} = -1$ is without loss of generality a fixed $20$- or $19$-cap $4$-flat (see Thackeray \cite{T21}), and the cap in the $4$-flat $x_{1} = 1$ is without loss of generality the image under an invertible linear map $T$ of some representative $19$- or $18$-cap $4$-flat in Thackeray \cite{T21}. For each $T$ and each choice of representative $c_{-1}$- and $c_{1}$-cap $4$-flats, a computer search found the maximum number of cap points in the $4$-flat $x_{1} = 0$: the search found each point $P$ that was not a midpoint of a line segment consisting of a cap point in $x_{1} = -1$ and a cap point in $x_{1} = 1$, and then took into account any lines of three points $P$. In each case, the maximum number of cap points in the $4$-flat $x_{1} = 0$ was at most $c_{0} - 1$. See Table \ref{tblcompsearch}.

(b) A second computer search similarly chose an $18$-cap $4$-flat from each of the $20$ isomorphism classes of $18$-cap $4$-flats as a representative, and looked at the $\binom{21}{2}$ pairs of isomorphism classes of $18$-cap $4$-flats for $x_{1} = \pm 1$ (including cases where the $18$-cap $4$-flats $x_{1} = \pm 1$ are isomorphic to each other), applying an invertible linear map $T$ to the $4$-flat $x_{1} = 1$. See Table \ref{tblcompsearch}.

\setlength{\tabcolsep}{2pt}
\begin{table}
\begin{tabular}{ccccccccccccccccccccccc}
\hline
& \rotatebox{90}{$990A_{1}$} & \rotatebox{90}{$990A_{2}$} & \rotatebox{90}{$990A_{3}$} & \rotatebox{90}{$990B$} & \rotatebox{90}{$981A$} & \rotatebox{90}{$981B$} & \rotatebox{90}{$981C$} & \rotatebox{90}{$981D$} & \rotatebox{90}{$981E$} & \rotatebox{90}{$981F$} & \rotatebox{90}{$981G$} & \rotatebox{90}{$981H$} & \rotatebox{90}{$981I$} & \rotatebox{90}{$981J$} & \rotatebox{90}{$972A$} & \rotatebox{90}{$963A$} & \rotatebox{90}{$963B$} & \rotatebox{90}{$954A$} & \rotatebox{90}{$882A_{1}$} & \rotatebox{90}{$882A_{2}$} & \rotatebox{90}{$19$-cap\mbox{ }} & \rotatebox{90}{$20$-cap\mbox{ }}\\
\hline
$20$-cap & $2$ & $2$ & $1$ & $2$ & $2$ & $2$ & $2$ & $2$ & $2$ & $2$ & $2$ & $2$ & $2$ & $2$ & $2$ & $2$ & $1$ & $2$ & $2$ & $2$ & $1$ & $1$\\
$19$-cap & $2$ & $3$ & $2$ & $3$ & $3$ & $3$ & $3$ & $3$ & $3$ & $3$ & $3$ & $3$ & $3$ & $3$ & $3$ & $3$ & $3$ & $3$ & $2$ & $3$ & $2$ & \\
$882A_{2}$ & $4$ & $5$ & $4$ & $4$ & $3$ & $4$ & $4$ & $4$ & $4$ & $4$ & $4$ & $4$ & $5$ & $3$ & $4$ & $4$ & $3$ & $4$ & $3$ & $9$ & & \\
$882A_{1}$ & $4$ & $3$ & $4$ & $3$ & $4$ & $3$ & $3$ & $3$ & $3$ & $3$ & $3$ & $3$ & $3$ & $3$ & $4$ & $4$ & $4$ & $5$ & $9$ & & & \\
$954A$ & $4$ & $4$ & $4$ & $4$ & $4$ & $4$ & $3$ & $3$ & $4$ & $3$ & $4$ & $4$ & $4$ & $4$ & $4$ & $4$ & $4$ & $4$ & & & & \\
$963B$ & $4$ & $3$ & $4$ & $4$ & $4$ & $3$ & $4$ & $4$ & $4$ & $4$ & $4$ & $4$ & $4$ & $4$ & $4$ & $4$ & $6$ & & & & & \\
$963A$ & $4$ & $3$ & $4$ & $4$ & $4$ & $4$ & $4$ & $4$ & $4$ & $4$ & $4$ & $4$ & $4$ & $4$ & $4$ & $4$ & & & & & & \\
$972A$ & $4$ & $3$ & $4$ & $4$ & $4$ & $4$ & $4$ & $4$ & $4$ & $4$ & $4$ & $4$ & $4$ & $4$ & $5$ & & & & & & & \\
$981J$ & $3$ & $3$ & $3$ & $3$ & $4$ & $3$ & $3$ & $4$ & $4$ & $4$ & $4$ & $4$ & $4$ & $4$ & & & & & & & & \\
$981I$ & $3$ & $4$ & $3$ & $4$ & $4$ & $4$ & $4$ & $4$ & $4$ & $4$ & $4$ & $4$ & $4$ & & & & & & & & & \\
$981H$ & $4$ & $4$ & $3$ & $4$ & $4$ & $4$ & $4$ & $4$ & $4$ & $4$ & $4$ & $4$ & & & & & & & & & & \\
$981G$ & $4$ & $3$ & $3$ & $3$ & $4$ & $3$ & $4$ & $4$ & $4$ & $4$ & $4$ & & & & & & & & & & & \\
$981F$ & $4$ & $4$ & $3$ & $4$ & $4$ & $4$ & $4$ & $4$ & $4$ & $4$ & & & & & & & & & & & & \\
$981E$ & $4$ & $3$ & $3$ & $4$ & $4$ & $4$ & $4$ & $4$ & $4$ & & & & & & & & & & & & & \\
$981D$ & $4$ & $3$ & $4$ & $4$ & $4$ & $4$ & $4$ & $4$ & & & & & & & & & & & & & & \\
$981C$ & $4$ & $3$ & $3$ & $4$ & $4$ & $4$ & $4$ & & & & & & & & & & & & & & & \\
$981B$ & $4$ & $4$ & $4$ & $4$ & $4$ & $4$ & & & & & & & & & & & & & & & & \\
$981A$ & $4$ & $3$ & $3$ & $4$ & $5$ & & & & & & & & & & & & & & & & & \\
$990B$ & $4$ & $3$ & $3$ & $4$ & & & & & & & & & & & & & & & & & & \\
$990A_{3}$ & $3$ & $3$ & $4$ & & & & & & & & & & & & & & & & & & & \\
$990A_{2}$ & $2$ & $4$ & & & & & & & & & & & & & & & & & & & & \\
$990A_{1}$ & $5$ & & & & & & & & & & & & & & & & & & & & & \\
\hline
\end{tabular}
\caption{Maximum sizes of subcaps in the $4$-flat $x_{1} = 0$ for pairs of isomorphism classes of $4$-dimensional caps in $x_{1} = \pm 1$, obtained by computer search.\label{tblcompsearch}}
\end{table}

In parts (a) and (b), for each case in the computer search, if at least seven points $P$ exist, then exactly nine points $P$ exist and they form a cap.

(c) Instead of using, say, the representative $882A_{1}$ from Thackeray \cite{T21} twice in the computer search of part (b), use the $882A_{1}$ caps in the $4$-flats $x_{1} = \pm 1$ in Figure \ref{fign5s45882A1}. The computer search verified that there are $144$ maps $T$ for which the $4$-flat $x_{1} = 0$ can have at least five cap points, and for each of those maps $T$, there are exactly nine points $P$ and those points $P$ form a cap. Those maps $T$ form two equivalence classes of equal sizes, where $T_{1}$ and $T_{2}$ are equivalent if and only if $T_{1}^{-1}T_{2}$ is a symmetry of the $18$-cap $4$-flat $x_{1} = 1$ in the figure composed with a translation. The two equivalence classes give, respectively, Figure \ref{fign5s45882A1} and its image under $(x_{1},\ldots,x_{5}) \mapsto (-x_{1},x_{2},-x_{5},x_{4},x_{3})$. The remaining statements of part (c) in the Lemma were verified.

(d) Proceed similarly with the $882A_{2}$ caps in the $4$-flats $x_{1} = \pm 1$ in Figure \ref{fign5s45m0}. The computer search verified that there are $32$ maps $T$ for which the $4$-flat $x_{1} = 0$ can have at least six cap points, and for each of those maps $T$, there are exactly nine points $P$ and they form a cap. Those $T$ form two equivalence classes of equal sizes as in part (c). The two classes give, respectively, Figure \ref{fign5s45m0} and its image under a certain invertible affine transformation (namely, $(x_{1},\ldots,x_{5}) \mapsto (x_{1},x_{2},-x_{3},x_{4},x_{5})$) that restricts to a symmetry of the cap in the $4$-flat $x_{1} = -1$. The isomorphism from Figure \ref{fign5s45882A1} to Figure \ref{fign5s45m0} is easily checked.

(e)-(g) The computer search yielded the required results as before.
\end{proof}

\begin{remark}
It was verified by computer that in Figure \ref{fign5s45m0}, for each point $P$ that is not a cap point, there are exactly five cap line segments of which $P$ is the midpoint. Therefore, if at most four cap points are deleted from the figure to obtain a new cap $C_{1}$ that is also a subcap of some $5$-dimensional cap $C_{2}$, then $C_{2}$ is a subcap of Figure \ref{fign5s45m0}.
\end{remark}
\begin{remark}
The name ``$\Delta{}686$'' comes from the following fact: the structure of $\{8,6,6\}$ hyperplane directions of $20$-cap $4$-flats implies that for a $5$-dimensional cap $C$ with an independent pair $(x_{3},x_{4})$ of co-ordinates, if the point count of $C$ for $(x_{3},x_{4})$ is
\[\left(\begin{array}{ccc}6 & 8 & 6\\ 8 & 8 & *\\ 6 & * & *\end{array}\right),\]
then for some co-ordinates $x_{1}$, $x_{2}$, and $x_{5}$ of $C$, the tuple $(x_{1},\ldots,x_{5})$ is independent and $C$ is the representative $\Delta{}686$ in Figure \ref{fign5s42Delta686v2}. (The $6$-cap $3$-flats $(x_{3},x_{4}) = (-1,\pm 1)$ and $(x_{3},x_{4}) = (1,1)$ are cubes minus long diagonal; without loss of generality, their centres are at $(x_{1},x_{2},x_{5}) = (0,0,0)$ and the $6$-cap $3$-flat $(x_{3},x_{4}) = (-1,-1)$ is as in the figure. All of $C$ is now forced.)

It was verified by computer that in Figure \ref{fign5s42Delta686v2}, for each point $P$ that is not a cap point, there are at least three cap line segments of which $P$ is the midpoint. Therefore, if at most two cap points are deleted from the figure to obtain a new cap $C_{1}$ that is also a subcap of some $5$-dimensional cap $C_{2}$, then $C_{2}$ is a subcap of Figure \ref{fign5s42Delta686v2}.
\end{remark}

\begin{figure}
\centering
\includegraphics{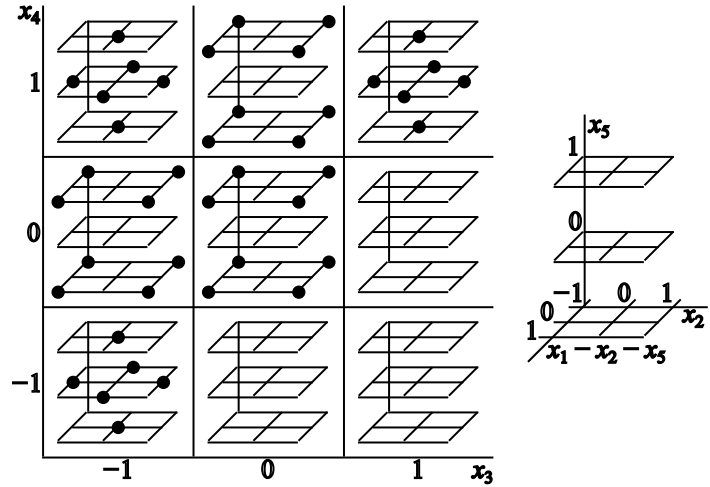}
\caption{The representative $\Delta{}686$, from another viewpoint.\label{fign5s42Delta686v2}}
\end{figure}

\begin{remark}
It was verified by computer that in each of the five representative caps $41A$ to $41E$, each $4$-flat contains at most $18$ cap points.
\end{remark}

\section{Caps of sizes 46 and 45}

Previous literature (see Edel et al.\@ \cite{EFLS02} and Potechin \cite{P08}) has proved the nonexistence of $46$-cap $5$-flats, as well as the uniqueness of $45$-cap $5$-flats up to isomorphism. However, we now briefly prove those results in new ways that illuminate some of the underlying structure involved; these proofs illustrate the type of argument that we use to classify $5$-dimensional caps of size at least $41$.

\begin{theorem}[No 46-cap 5-flats] \label{n5s46m2}
There is no $46$-cap $5$-flat $C$.
\end{theorem}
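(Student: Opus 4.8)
The plan is to suppose for contradiction that a $46$-cap $5$-flat $C$ exists and to derive a contradiction by a standard-diagram argument, exactly of the type used in the proof of Lemma \ref{n4s17m3}(c) (compare Figure \ref{fign4s17}). First I would observe that, since every $4$-dimensional cap has at most $20$ points (Thackeray \cite{T21}), each of the $121$ hyperplane (that is, $4$-flat) directions $D$ of $C$ has a point count $\{p_{1},p_{2},p_{3}\}$ with $p_{1}+p_{2}+p_{3}=46$ and $p_{i}\le 20$ for each $i$; a short enumeration shows there are exactly $24$ such multisets, the largest part lying between $16$ and $20$. Next I would invoke Lemma \ref{n5m4largesmall}(a): if the two largest parts of some direction were at least $19$ and at least $18$ respectively, then $|C|\le 41<46$, a contradiction. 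Hence no hyperplane direction of $C$ has point count among the five multisets $\{20,20,6\}$, $\{20,19,7\}$, $\{20,18,8\}$, $\{19,19,8\}$, and $\{19,18,9\}$, which leaves only the other $19$ multisets available.

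The heart of the argument is the standard diagram for $(n,s)=(5,46)$. As in \cite{T21}, this assigns to a direction $D$ with parts $p_{1},p_{2},p_{3}$ the point $P_{D}=\bigl(\sum_{i}\binom{p_{i}}{2},\ \sum_{i}\binom{p_{i}}{3}\bigr)$, and double-counting the pairs --- and the automatically non-collinear triples --- of cap points lying in a common hyperplane of a prescribed direction shows that the centre of mass of the $121$ points $P_{D}$ is $P_{Cr}=\bigl(40\binom{46}{2}/121,\ 13\binom{46}{3}/121\bigr)=(41400/121,\ 197340/121)$. I would then take $L$ to be the line through the two points $P_{D}$ arising from the multisets $\{16,15,15\}$ and $\{18,18,10\}$, namely $94x-7y=20730$, and check by direct computation that each of the five multisets excluded by Lemma \ref{n5m4largesmall}(a) gives a point with $94x-7y>20730$, that every one of the remaining $19$ multisets gives a point with $94x-7y\le 20730$, and that $P_{Cr}$ gives $94x-7y=2510220/121>20730$. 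In the orientation of the standard diagram this says every $P_{D}$ lies on or above $L$ whereas their centre of mass $P_{Cr}$ lies strictly below $L$, which is impossible; this contradiction proves the theorem.

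The only real content beyond bookkeeping is confirming this separation, and its margins are tight: the permitted multiset $\{16,16,14\}$ gives $94x-7y=20726$, only four short of the threshold $20730$ that defines $L$, while $P_{Cr}$ exceeds $20730$ by only a little over fifteen; so the verification of these inequalities should be carried out in exact rational arithmetic rather than with decimal approximations. Conceptually, what makes the argument succeed is that Lemma \ref{n5m4largesmall}(a) is exactly sharp here: the five point counts it forbids are precisely those on the forbidden side of the line $L$ determined by the two extreme permitted multisets, and $P_{Cr}$ happens to fall just across $L$ onto that same side. If this tight separation failed, one would have to exclude additional multisets using finer results such as the clash Lemmas \ref{n5clash}, \ref{n5865865}, and \ref{n5no919928}, or the point-count Lemma \ref{n4m3}; but for $s=46$ none of that is needed.
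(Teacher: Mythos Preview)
Your proof is correct and follows essentially the same approach as the paper's: exclude hyperplane point counts via Lemma~\ref{n5m4largesmall}(a), then apply a standard-diagram separating-line argument. The only difference is cosmetic --- the paper excludes just the four multisets $\{20,20,6\}$, $\{20,19,7\}$, $\{19,19,8\}$, $\{20,18,8\}$ and draws $L$ through $P_{19,18,9}$ and $P_{16,15,15}$ (equation $10y=133x-29190$), leaving $\{19,18,9\}$ on the line rather than excluding it, whereas you also exclude $\{19,18,9\}$ (equally valid by the same lemma) and instead pass $L$ through $P_{18,18,10}$ and $P_{16,15,15}$.
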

\begin{proof} By Lemma \ref{n5m4largesmall}, no $4$-flat direction of $C$ has point count among $\{20,20,6\}$, $\{20,19,7\}$, $\{19,19,8\}$, and $\{20,18,8\}$. Therefore, in the standard diagram in Figure \ref{fign5s46}, all points $P_{D}$ are on or above the line $L$, but the centre of mass of the points $P_{D}$ is $P_{Cr}$, which is below $L$.
\end{proof}
\begin{figure}[tbph]
\centering
\includegraphics{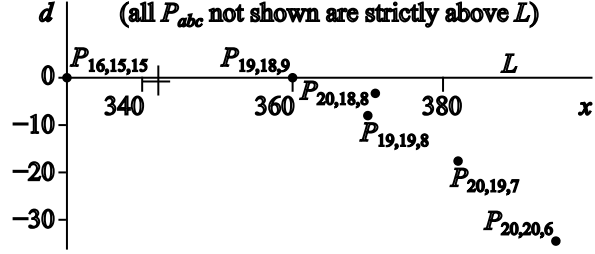}
\caption{Standard diagram for $(n,s) = (5,46)$. The line $L$ has equation $10y = 133x - 29190$ and passes through $P_{19,18,9}$ and $P_{16,15,15}$.\label{fign5s46}}
\end{figure}

We prove the uniqueness of the $45$-cap $5$-flat up to isomorphism. First, we use the standard diagram to determine the point counts of the $4$-flat directions.

\begin{proposition} \label{n5s45m4} In every $45$-cap $5$-flat $C$, the numbers of hyperplane directions with point counts $\{15,15,15\}$ and $\{18,18,9\}$ are $66$ and $55$ respectively.
\end{proposition}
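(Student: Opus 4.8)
The plan is to first determine the point counts of the $4$-flat (hyperplane) directions of $C$ by a standard-diagram argument, and then to read off the two cardinalities from a centre-of-mass identity. This parallels the proof of Theorem~\ref{n5s46m2}, with one difference: there the centre of mass $P_{Cr}$ landed strictly below the relevant line and produced a contradiction, whereas here it lands exactly on the line and produces an exact count.

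Concretely, I would work with the standard diagram for $(n,s) = (5,45)$, in which (following \cite{T21}) a direction with point count $\{a,b,c\}$ is plotted at $x = \binom{a}{2}+\binom{b}{2}+\binom{c}{2}$, $y = \binom{a}{3}+\binom{b}{3}+\binom{c}{3}$, and I would take $L$ to be the line through $P_{18,18,9}$ and $P_{15,15,15}$, namely $y = 13x - 2730$. Writing $d = y - 13x + 2730$ for the signed vertical distance above $L$, one has $d = f(a)+f(b)+f(c)+2730$ where $f(t) = \binom{t}{3} - 13\binom{t}{2} = \tfrac{1}{6}\,t(t-1)(t-41)$. Since a $4$-flat holds at most $20$ cap points \cite{T21}, there are only finitely many candidate point counts (those with $a \ge b \ge c \ge 0$, $a+b+c = 45$, $a \le 20$), and I would check by evaluating $f$ that $d < 0$ occurs exactly for $\{20,20,5\}$, $\{20,19,6\}$, $\{20,18,7\}$, $\{19,19,7\}$, and $\{19,18,8\}$; that $d = 0$ occurs exactly for $\{18,18,9\}$ and $\{15,15,15\}$; and that $d > 0$ otherwise. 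The five point counts with $d < 0$ are precisely those having two entries that are $\ge 18$, at least one of which is $\ge 19$, so Lemma~\ref{n5m4largesmall}(a) forbids all of them for a $45$-cap; hence every point $P_D$ of the standard diagram of $C$ lies on or above $L$.

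Next I would show that $P_{Cr}$, the centre of mass of the $121$ points $P_D$, lies on $L$. Its $x$-coordinate is $\tfrac{1}{121}\sum_D\bigl(\binom{a_D}{2}+\binom{b_D}{2}+\binom{c_D}{2}\bigr)$, and $\sum_D\bigl(\binom{a_D}{2}+\binom{b_D}{2}+\binom{c_D}{2}\bigr) = \sum_H \binom{|S\cap H|}{2} = \sum_{\{P,Q\}\subseteq S}\#\{H : \{P,Q\}\subseteq H\}$, where $H$ runs over the $363$ hyperplanes of the $5$-flat; since each pair of distinct points lies in exactly $40$ hyperplanes, this sum is $40\binom{45}{2} = 39600$, so the $x$-coordinate of $P_{Cr}$ is $39600/121$. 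Likewise, using that $C$ has no collinear triple and that an affinely independent triple of points lies in exactly $13$ hyperplanes, the $y$-coordinate of $P_{Cr}$ is $13\binom{45}{3}/121 = 184470/121$; and $13\cdot 39600 - 2730\cdot 121 = 184470$, so $P_{Cr}\in L$. As every $P_D$ is on or above $L$ while their centre of mass is on $L$, every $P_D$ lies on $L$; that is, each hyperplane direction of $C$ has point count $\{18,18,9\}$ or $\{15,15,15\}$.

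Finally, let $p$ and $q$ be the numbers of hyperplane directions with point counts $\{15,15,15\}$ and $\{18,18,9\}$ respectively. Then $p+q = 121$, and comparing $x$-coordinates of $P_{Cr}$ gives $315p + 342q = 39600$, since $\binom{a}{2}+\binom{b}{2}+\binom{c}{2}$ equals $315$ for $\{15,15,15\}$ and $342$ for $\{18,18,9\}$; solving these two equations gives $q = 55$ and $p = 66$. The one delicate point — a matter of care rather than difficulty — is the computation in the two middle paragraphs: one must confirm that the point counts with $d < 0$ are exactly the five forbidden by Lemma~\ref{n5m4largesmall}(a), and that $P_{Cr}$ lies on $L$ rather than strictly below it (in contrast with the case $s = 46$). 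It is precisely this tangency that turns a non-existence argument into an exact enumeration.
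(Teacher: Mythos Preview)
Your proof is correct and follows essentially the same route as the paper: the paper's argument is just ``forbid $\{20,20,5\}$, $\{20,19,6\}$, $\{19,19,7\}$, $\{20,18,7\}$, $\{19,18,8\}$ via Lemma~\ref{n5m4largesmall} and apply the standard diagram with $L: y = 13x - 2730$,'' and you have simply unpacked that into explicit coordinates, verified that $P_{Cr}$ lies on $L$, and solved the resulting linear system. The only difference is level of detail, not method.
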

\begin{proof} By Lemma \ref{n5m4largesmall}, no $4$-flat direction of $C$ has point count among $\{20,20,5\}$, $\{20,19,6\}$, $\{19,19,7\}$, $\{20,18,7\}$, and $\{19,18,8\}$. Apply the standard diagram in Figure \ref{fign5s45}.
\end{proof}

\begin{figure}[tbph]
\centering
\includegraphics{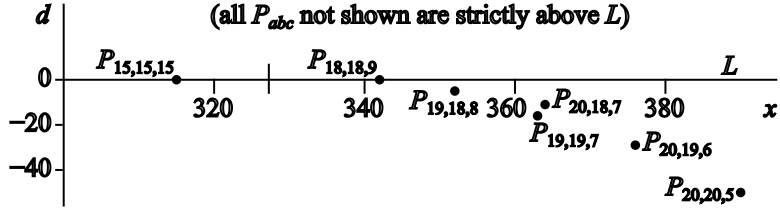}
\caption{Standard diagram for $(n,s) = (5,45)$. The line $L$ has equation $y = 13x - 2730$ and passes through $P_{Cr}$, $P_{18,18,9}$, and $P_{15,15,15}$.\label{fign5s45}}
\end{figure}

The uniqueness of the $45$-cap $5$-flat up to isomorphism quickly follows, by Lemma \ref{n5m4largesmall}.

\begin{theorem}[The 45-cap 5-flat] \label{n5s45m0}
Every $45$-cap $5$-flat is isomorphic to the representative cap in Figure \ref{fign5s45m0}.
\end{theorem}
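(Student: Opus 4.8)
The plan is to extract from $C$ a $\{18,18,9\}$ hyperplane direction, pin down the isomorphism types of the two $18$-cap $4$-flats appearing in it, and then read off $C$ from the explicit figures of Lemma \ref{n5m4largesmall}.

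First I would apply Proposition \ref{n5s45m4}: the $45$-cap $5$-flat $C$ has $55$ hyperplane directions with point count $\{18,18,9\}$, so in particular there is a co-ordinate $x_{1}$ of $C$ for which (after an affine change of the $x_{1}$ co-ordinate, so as to name the three values appropriately) the $4$-flats $x_{1} = \pm 1$ each contain exactly $18$ cap points and the $4$-flat $x_{1} = 0$ contains exactly $9$. Each of the $18$-cap $4$-flats $x_{1} = \pm 1$ lies in one of the $20$ isomorphism classes of $18$-cap $4$-flats. Because $|C| = 45$, Lemma \ref{n5m4largesmall}(b) forces case (i) there: the two $4$-flats $x_{1} = \pm 1$ are either both $882A_{1}$ caps or both $882A_{2}$ caps. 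Indeed, any other pair of isomorphism classes for $x_{1} = \pm 1$ falls into case (ii), (iii), or (iv) of Lemma \ref{n5m4largesmall}(b) and would give $|C| \leq 42$, $|C| \leq 41$, or $|C| \leq 40$, contradicting $|C| = 45$.

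Then I would split into those two cases. If $x_{1} = \pm 1$ are both $882A_{1}$ caps, then since $|C| = 45 \geq 41$, Lemma \ref{n5m4largesmall}(c) supplies co-ordinates $x_{2},\ldots,x_{5}$ (with $(x_{1},\ldots,x_{5})$ independent, possibly after negating $x_{1}$) for which every cap point of $C$ lies among the cap points of Figure \ref{fign5s45882A1}; since that figure has exactly $45$ cap points and $|C| = 45$, the cap $C$ coincides with it, and then the explicit invertible affine transformation of Lemma \ref{n5m4largesmall}(d) identifies Figure \ref{fign5s45882A1} with Figure \ref{fign5s45m0}, so $C$ is isomorphic to the representative cap. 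If instead $x_{1} = \pm 1$ are both $882A_{2}$ caps, then since $|C| = 45 \geq 42$, Lemma \ref{n5m4largesmall}(d) supplies co-ordinates $x_{2},\ldots,x_{5}$ (with $(x_{1},\ldots,x_{5})$ independent) for which every cap point of $C$ lies among the $45$ cap points of Figure \ref{fign5s45m0}, whence $C$ again coincides with that figure. In either case $C$ is isomorphic to the representative $45$-cap $5$-flat, as required.

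There is no genuinely hard step here: the real content --- the $4$-dimensional classification of Thackeray \cite{T21}, the standard-diagram count of Proposition \ref{n5s45m4}, and the computer-assisted extension searches underlying Lemma \ref{n5m4largesmall} --- is already in hand. The only things to watch are bookkeeping: that the size bound $|C| = 45$ alone rules out every pair of isomorphism classes for $x_{1} = \pm 1$ except two $882A_{1}$'s or two $882A_{2}$'s, and that Figures \ref{fign5s45882A1} and \ref{fign5s45m0} each contain exactly $45$ cap points, so that the containments produced by Lemma \ref{n5m4largesmall}(c) and (d) are in fact equalities.
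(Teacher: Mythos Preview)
Your proposal is correct and follows essentially the same route as the paper's proof: both use Proposition~\ref{n5s45m4} to obtain an $\{18,18,9\}$ hyperplane direction and then invoke Lemma~\ref{n5m4largesmall} to identify $C$ with Figure~\ref{fign5s45882A1} or Figure~\ref{fign5s45m0} and to check that those two figures are isomorphic. The paper simply compresses your two-case analysis (both $882A_{1}$ versus both $882A_{2}$, handled via parts~(c) and~(d) of the lemma) into a single sentence.
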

\begin{proof}
By Proposition \ref{n5s45m4}, every $45$-cap $5$-flat has an $\{18,18,9\}$ hyperplane direction. By Lemma \ref{n5m4largesmall}, every $45$-cap $5$-flat with an $\{18,18,9\}$ hyperplane direction is isomorphic to Figure \ref{fign5s45882A1} or Figure \ref{fign5s45m0}. By the same lemma, the caps in those two figures are isomorphic to each other.
\end{proof}

\begin{remark}
The symmetry group of Figure \ref{fign5s45m0} has size $720$ and acts transitively on the set of cap points. A proof outline follows. Check transitivity directly by using the symmetries
\[\begin{array}{l}
T \colon (x_{1},\ldots,x_{5}) \mapsto (x_{1} - x_{2} + 1,x_{2} + 1,x_{5},x_{3},x_{4}),\\
R \colon (x_{1},\ldots,x_{5}) \mapsto (-x_{1},x_{3},-x_{2},x_{4} + x_{5},-x_{4} + x_{5})\textrm{, and}\\
M = TR^{2}T \colon (x_{1},\ldots,x_{5}) \mapsto (x_{1},-x_{2},x_{3},-x_{5},-x_{4}).
\end{array}\]
It now suffices to show that the stabilizer $G$ of the point $O$ in Figure \ref{fign5s45m0} is the $16$-element group generated by $R$ and $M$. Consider the $3$-flat directions in which the $3$-dimensional caps are eight square pyramids and one tetrahedron plus centre. The only such $3$-flat direction in which $O$ is the centre of the tetrahedron plus centre consists of the $3$-flats $(x_{4},x_{5}) = (a_{4},a_{5})$ for $a_{4}$ and $a_{5}$ in $\mathbb{Z}/3\mathbb{Z}$, so each symmetry in $G$ induces an action on $(x_{4},x_{5})$ space (see the depiction of the $45$-cap $5$-flat in Figure \ref{fign5s45sqpyr}). Now find all possible such actions, and obtain the corresponding symmetries in $G$.
\end{remark}
\begin{figure}[tbph]
\centering
\includegraphics{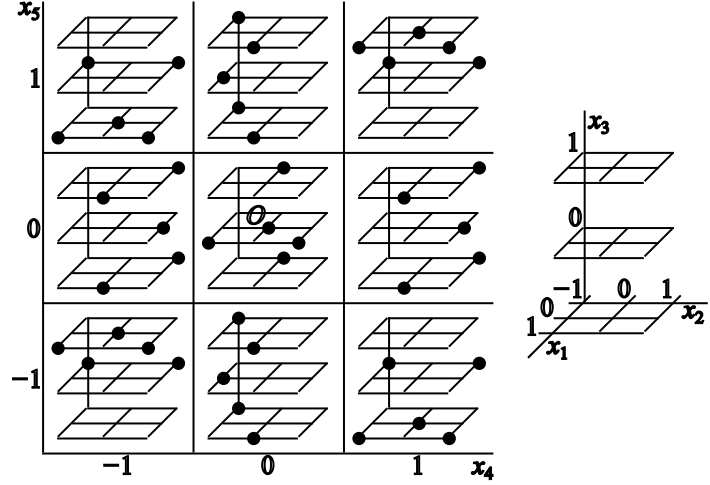}
\caption{The representative $45$-cap $5$-flat, from another viewpoint.\label{fign5s45sqpyr}}
\end{figure}

\section{Remarks on the proof technique for caps of size 41}\label{secproofs41}

The classification of $41$-cap $5$-flats $C$ proceeds according to the following strategy.

We start by using the standard diagram to show that the point count of at least one $4$-flat direction, with an appropriate co-ordinate $x_{1}$, is in some finite set $S_{4}$. The set $S_{4}$ includes the point counts $\{20,20,1\}$ and $\{18,18,5\}$; for the latter, Lemma \ref{n5m4largesmall} implies that $C$ can be obtained from one of Figures \ref{fign5s45m0} through \ref{fign5s41E} by removing at most four cap points.

For each $4$-flat point count in $S_{4}$ that is neither $\{20,20,1\}$ nor $\{18,18,5\}$, we obtain a finite set $S_{3}$ of $3$-flat point counts such that for some co-ordinate $x_{2}$, the point count of the cap for $(x_{1},x_{2})$ is in $S_{3}$. To do this, we exploit $3$-flat point counts of the cap in the $4$-flat $x_{1} = -1$, and we apply Lemmas \ref{n4m3}, \ref{n5clash}, \ref{n5865865}, \ref{n5no919928}, and \ref{n5m4largesmall}; in particular, we apply Lemma \ref{n5m4largesmall} by excluding each $(x_{1},x_{2})$ point count that yields an $\{a,b,c\}$ hyperplane direction of $C$ with $a \geq 19$ and $b \geq 18$.

Finally, we show that for each $4$-flat point count in $S_{4}$ other than $\{20,20,1\}$ and $\{18,18,5\}$, each $3$-flat point count in the corresponding set $S_{3}$
\begin{itemize}
\item is impossible,
\item yields a $4$-flat direction with point count $\{20,20,1\}$ or $\{18,18,5\}$, or
\item implies that either $C$ is isomorphic to one of four complete representative caps that will be described later, or $C$ is complete and has a particular form.
\end{itemize}
On several occasions, the following computer search technique is used to find all possible caps up to isomorphism for a specific $3$-flat point count.

Suppose that for some independent pair $(x_{1},x_{2})$ of co-ordinates of $C$, the cap $C$ has the $3$-flat point count
\[\left(\begin{array}{ccc}
n_{-1,1} & n_{0,1} & n_{1,1}\\
n_{-1,0} & n_{0,0} & n_{1,0}\\
n_{-1,-1} & n_{0,-1} & n_{1,-1}
\end{array}\right).\]

First, we find a representative of each possible cap in the $4$-flat $x_{1} = -1$ with a specified restriction of the co-ordinate $x_{2}$ up to isomorphism, as well as a representative of each possible cap in the $4$-flat $x_{2} = -1$ with a specified restriction of the co-ordinate $x_{1}$ up to isomorphism. To do this, we use the classification of $18$-, $19$-, and $20$-cap $4$-flats in Thackeray \cite{T21} and the lists of caps generated in section \ref{secrestr}.

For instance, if the left column of the matrix is $(9\mbox{ }1\mbox{ }9)^{T}$, then the cap in the $4$-flat $x_{1} = -1$ is a $19$-cap $4$-flat in which the $x_{2}$-hyperplane direction has point count $\{9,1,9\}$. If the bottom row of the matrix is $(9 \mbox{ } 7 \mbox{ } 2)$, then there are four options for the cap in the $4$-flat $x_{2} = -1$ up to isomorphism, namely a $981B$, a $981D$, a $981I$, and a $972A$: these are the $18$-cap $4$-flats that have at least one $\{9,7,2\}$ hyperplane direction, and for each of those four caps, the symmetry group acts transitively on the set of $\{9,7,2\}$ hyperplane directions of the cap. In general, we need to consider the possibility that in a $4$-dimensional cap, for some set of two or more $3$-flat directions of that cap with the same point count, no symmetry of the cap sends any $3$-flat direction in that set to any other; in that case, a representative for each $3$-flat direction in that set must be considered.

Next, we compare the representatives for $x_{1} = -1$ with the representatives for $x_{2} = -1$, and we determine which pairs of representatives are compatible at the $3$-flat $(x_{1},x_{2}) = (-1,-1)$, that is, which pairs of representatives yield the same isomorphism class for the cap in that $3$-flat. The representatives can be chosen so that they give the same cap in that $3$-flat (not just the same cap up to isomorphism).

For each pair of compatible representatives, and for each symmetry $T$ of the $3$-flat $(x_{1},x_{2}) = (-1,-1)$, we combine the representative for $x_{2} = -1$ with an image of the representative for $x_{1} = -1$ under an affine transformation of which the restriction to $(x_{1},x_{2}) = (-1,-1)$ is $T$. (For instance, if $n_{-1,-1} = 9$, then there are $144$ symmetries $T$ to check.) We now have a list of options for the left column and bottom row of the matrix combined.

Finally, for each of those options, by considering which positions are not midpoints of known cap line segments, we find the possible caps in the $3$-flat $(x_{1},x_{2}) = (0,0)$, then in the $3$-flat $(x_{1},x_{2}) = (1,0)$, then in the $3$-flat $(x_{1},x_{2}) = (0,1)$, then in the $3$-flat $(x_{1},x_{2}) = (1,1)$.

\section{Caps of size 41}

Throughout this subsection, $C$ is an arbitrary $41$-cap $5$-flat.

\begin{proposition}[4-flats in 41-cap 5-flats] \label{n5s41}
No hyperplane point count of $C$ is among $\{20,19,2\}$, $\{20,18,3\}$, $\{19,19,3\}$, and $\{19,18,4\}$. In $C$, at least one hyperplane direction has point count $\{a,b,c\}$ such that $(a,b,c) = (20,20,1)$, $(a,b,c) = (18,18,5)$, or $20 \geq a \geq 17 \geq b \geq 16 \geq c$.
\end{proposition}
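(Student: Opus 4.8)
The plan is to prove the two sentences separately. The first sentence follows directly from Lemma \ref{n5m4largesmall}(a): writing each of the multisets $\{20,19,2\}$, $\{20,18,3\}$, $\{19,19,3\}$, $\{19,18,4\}$ in decreasing order as $\{a,b,c\}$, one has $a\ge 19$, $b\ge 18$, and $(a,b)\ne(20,20)$; so if $C$ had a hyperplane direction with such a point count, then choosing the co-ordinate $x_{1}$ so that the $4$-flats $x_{1}=-1$ and $x_{1}=1$ carry $a$ and $b$ cap points respectively puts $C$ in the scope of Lemma \ref{n5m4largesmall}(a) with $(c_{-1},c_{1})\ne(20,20)$, hence $|C|\le 40$, contradicting $|C|=41$.

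For the second sentence the plan is to run the standard-diagram argument used in Theorem \ref{n5s46m2} and Proposition \ref{n5s45m4}. To each hyperplane direction $D$ of a $41$-cap $5$-flat, with point count $\{a,b,c\}$, attach the point $P_{D}=\bigl(\binom{a}{2}+\binom{b}{2}+\binom{c}{2},\ \binom{a}{3}+\binom{b}{3}+\binom{c}{3}\bigr)$. Since each unordered pair of cap points lies in a common hyperplane in exactly $40$ of the $121$ directions and each unordered triple of cap points does so in exactly $13$ of them, the centre of mass of the points $P_{D}$ is $P_{Cr}=\bigl(\binom{41}{2}\cdot 40/121,\ \binom{41}{3}\cdot 13/121\bigr)=(32800/121,\,138580/121)$. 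Next, introduce the line $L\colon 8y=95x-16588$, which passes through $P_{14,14,13}$ and $P_{16,16,9}$, and check two facts. First, $P_{Cr}$ lies strictly below $L$. Second, running over the $40$ multisets $\{a,b,c\}$ with $a+b+c=41$ and $20\ge a\ge b\ge c\ge 1$, the point $P_{D}$ lies strictly below $L$ exactly when $\{a,b,c\}$ is one of the four counts forbidden by the first sentence, or $(a,b,c)=(20,20,1)$, or $(a,b,c)=(18,18,5)$, or $20\ge a\ge 17\ge b\ge 16\ge c$; for every other point count $P_{D}$ lies on or above $L$ (with $P_{14,14,13}$ and $P_{16,16,9}$ on $L$). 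Because the closed half-plane on or above $L$ is convex but does not contain $P_{Cr}$, the $121$ points $P_{D}$ cannot all lie on or above $L$; hence some hyperplane direction of $C$ has a point count strictly below $L$, which by the classification just made is one of the four forbidden counts --- excluded by the first sentence --- or one of the counts asserted by the proposition. I would present this as the standard diagram for $(n,s)=(5,41)$, with $L$ drawn in.

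The step I expect to be delicate is the second fact above. The point $P_{Cr}$ lies below $L$ by only a small margin --- in the normalisation $95x-16588-8y$ the slack is $212/121$ --- and several admissible point counts are very close to $L$ on the correct side, for instance $\{19,16,6\}$ just below it and $\{18,15,8\}$ just above it; so the exact value of $P_{Cr}$, and correspondingly the choice of $L$, really matters, and verifying the classification amounts to checking a short explicit inequality for each of the $40$ point counts. Everything else is the same convexity-of-the-centre-of-mass reasoning already used in the paper.
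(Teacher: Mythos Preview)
Your proof is correct and follows essentially the same approach as the paper: the first sentence is derived from Lemma~\ref{n5m4largesmall}(a), and the second from the standard diagram with the line $L\colon 8y=95x-16588$ through $P_{16,16,9}$ and $P_{14,14,13}$, exactly as in Figure~\ref{fign5s41}. The only cosmetic difference is that the paper first sets aside the case of a $\{20,20,1\}$ hyperplane direction before invoking the diagram, whereas you keep $\{20,20,1\}$ among the below-$L$ point counts and handle it in the final disjunction; your explicit computation of $P_{Cr}$ and of the slack $212/121$ is a welcome addition.
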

\begin{proof}
Lemma \ref{n5m4largesmall} implies the first sentence; the standard diagram in Figure \ref{fign5s41} yields the second sentence.
\end{proof}
\begin{figure}[tbph]
\centering
\includegraphics{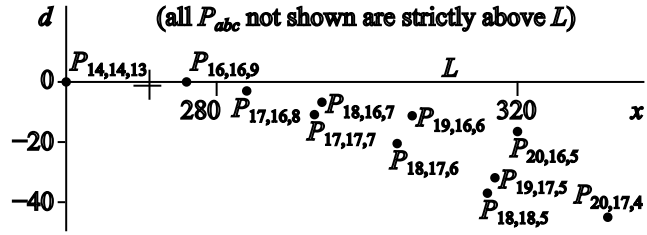}
\caption{Standard diagram for $(n,s) = (5,41)$ after using Lemma \ref{n5m4largesmall} and assuming that the cap has no $\{20,20,1\}$ hyperplane directions. The line $L$ has equation $8y = 95x - 16588$ and passes through $P_{16,16,9}$ and $P_{14,14,13}$.\label{fign5s41}}
\end{figure}

A $41F$ (respectively a $41G$, a $41H$, a $41I$) is defined to be an arbitrary cap that is isomorphic to the corresponding representative among Figures \ref{fign5s41F}, \ref{fign5s41G}, \ref{fign5s41H}, and \ref{fign5s41I}.
\begin{figure}
\centering
\includegraphics{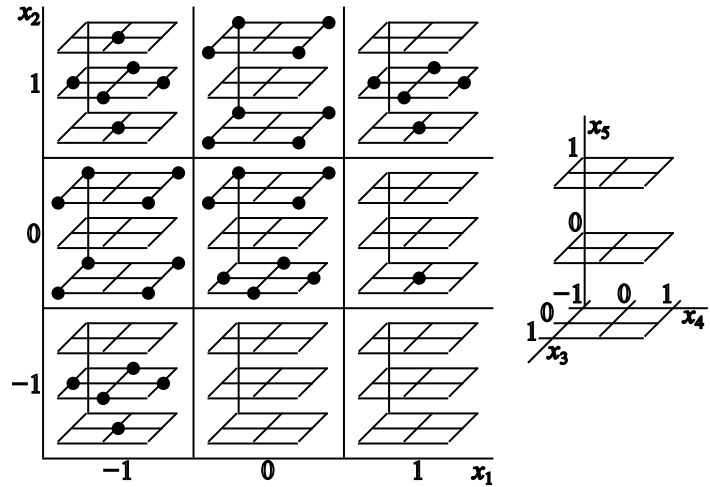}
\caption{The representative $41F$.\label{fign5s41F}}
\end{figure}
\begin{figure}
\centering
\includegraphics{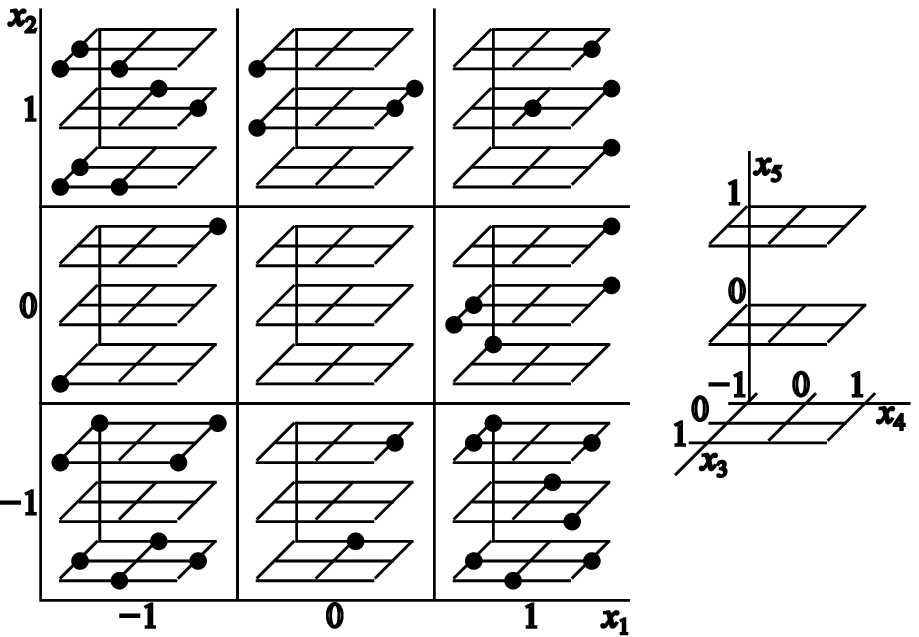}
\caption{The representative $41G$.\label{fign5s41G}}
\end{figure}
\begin{figure}
\centering
\includegraphics{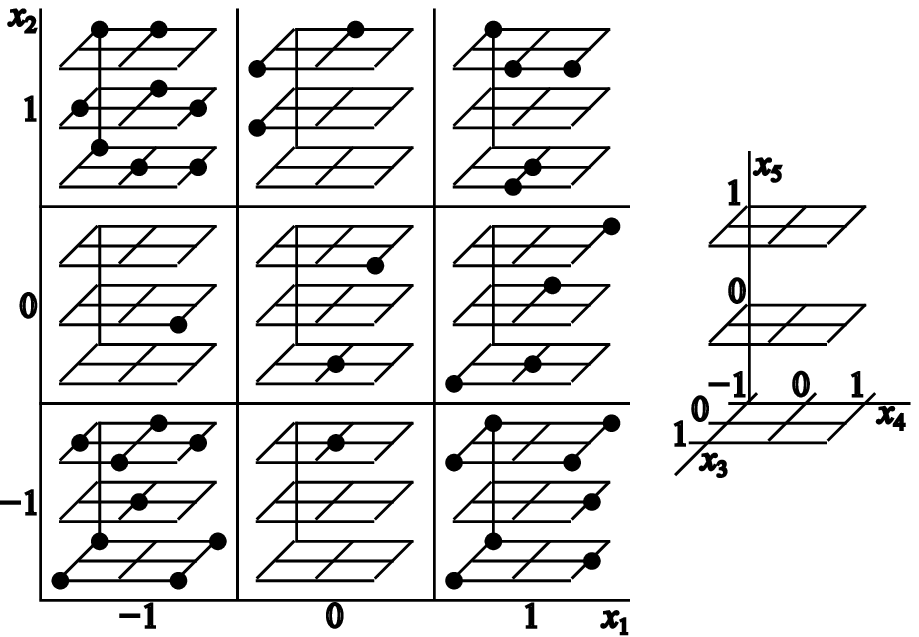}
\caption{The representative $41H$.\label{fign5s41H}}
\end{figure}
\begin{figure}
\centering
\includegraphics{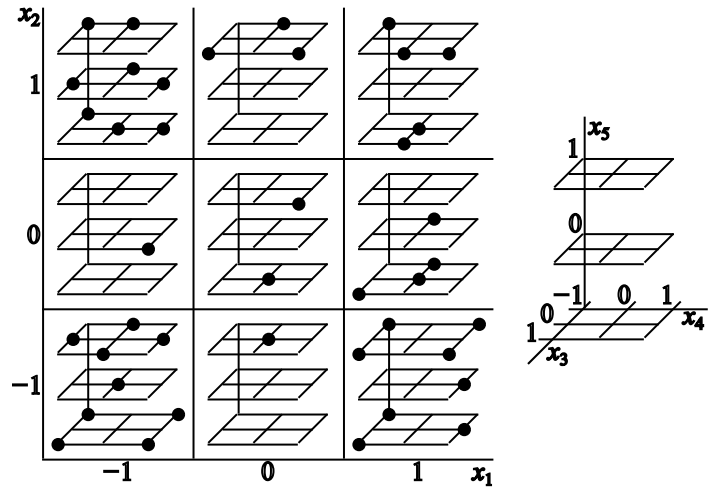}
\caption{The representative $41I$.\label{fign5s41I}}
\end{figure}
\begin{proposition} \label{n5s41201716}
Suppose that $C$ has no $\{20,20,1\}$ hyperplane directions and no $\{18,18,5\}$ hyperplane directions.
\begin{itemize}
\item[(a)] If some $4$-flat direction of $C$ has point count $\{20,17,4\}$ or $\{20,16,5\}$, then $C$ is a $\Delta{}686$ minus one cap point. If $C$ has a $4$-flat direction with point count $\{19,17,5\}$ or $\{19,16,6\}$, then $C$ is either a $\Delta{}686$ minus one cap point or a $41F$.
\item[(b)] If some $4$-flat direction of $C$ has point count $\{18,17,6\}$ or $\{18,16,7\}$, then $C$ is a $45$-cap $5$-flat minus four cap points, a $\Delta{}686$ minus one cap point, a $41F$, a $41G$, a $41H$, a $41I$, or a complete cap in which some saddled cube is the intersection of the $18$-cap $4$-flat in a $\{18,17,6\}$ or $\{18,16,7\}$ hyperplane direction and an $18$-, $17$-, or $16$-cap $4$-flat. (The lists in the supplementary files include all possible caps up to isomorphism for the last of those options; the lists may have more than one cap in each isomorphism class.)
\item[(c)] Suppose that the point count of some $4$-flat direction of $C$ is $\{17,17,7\}$ or $\{17,16,8\}$. It follows that $C$ is in case (a) or case (b).
\end{itemize}
\end{proposition}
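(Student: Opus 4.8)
The plan is to reduce to parts (a) and (b): starting from a $\{17,17,7\}$ or $\{17,16,8\}$ hyperplane direction, I would produce another hyperplane direction of $C$ whose point count is one of $\{20,17,4\}$, $\{20,16,5\}$, $\{19,17,5\}$, $\{19,16,6\}$, $\{18,17,6\}$, and $\{18,16,7\}$, so that part (a) or (b) then applies --- the remaining configurations being impossible. Following the strategy of Section~\ref{secproofs41}, I would fix a co-ordinate $x_{1}$ realizing the given hyperplane direction, with a $17$-cap $4$-flat at $x_{1} = -1$. By Lemma~\ref{n4s17m3}(c) this $17$-cap $4$-flat has a hyperplane direction with point count $\{9,8,0\}$, $\{9,7,1\}$, or $\{8,8,1\}$; I would extend that direction to a co-ordinate $x_{2}$ with $(x_{1},x_{2})$ independent, so that the $x_{1} = -1$ column of the $3 \times 3$ point-count matrix of $C$ for $(x_{1},x_{2})$ is one of those three multisets.

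Next I would enumerate the candidate matrices: the $x_{1} = -1$ column is fixed up to a permutation of the rows, the $x_{1} = 0$ and $x_{1} = 1$ columns have prescribed sums (namely $7$ or $8$, and $17$, with every entry at most $9$), and each row must be a legal $4$-flat point count, which by Lemma~\ref{n4m3} constrains the entries of that row. This finite list is then pruned with Lemmas~\ref{n4m3}, \ref{n5clash}, \ref{n5865865}, and~\ref{n5no919928}, and --- most importantly --- Lemma~\ref{n5m4largesmall}(a): any matrix forcing a hyperplane direction of $C$ with point count $\{a,b,c\}$ where $a \geq 19$ and $b \geq 18$ is discarded, because $|C| = 41$ would then force the count to be $\{20,20,1\}$, contradicting the standing hypothesis. (The empty matrix entry in the $\{9,8,0\}$ case is particularly restrictive, since the $4$-flat in the corresponding $x_{2}$-slice then has an empty $x_{1}$-hyperplane and hence at most $18$ cap points.)

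For each surviving matrix I would run the computer search of Section~\ref{secproofs41}: reconstruct the cap in the $4$-flat $x_{1} = -1$ and the cap in the $4$-flat $x_{2} = -1$ from the classification of $18$-, $19$-, and $20$-cap $4$-flats in Thackeray~\cite{T21} together with the supplementary lists of smaller $4$-dimensional caps with prescribed hyperplane structure; over all symmetries $T$ of the $3$-flat $(x_{1},x_{2}) = (-1,-1)$, glue a compatible pair of representatives along that $3$-flat; and then fill in the $3$-flats $(0,0)$, $(1,0)$, $(0,1)$, and $(1,1)$ in turn. The search is expected to confirm that every completion to $41$ cap points either has a hyperplane direction with one of the six point counts of (a) and (b) --- in some cases only after using the standing hypothesis to rule out an intermediate $\{20,20,1\}$ or $\{18,18,5\}$ direction --- or is one of the complete caps already listed in (a) and (b); all other completions fall short of $41$ cap points.

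The main obstacle is the size and bookkeeping of the case analysis, not any single deep step. The $\{9,8,0\}$, $\{9,7,1\}$, and $\{8,8,1\}$ columns admit many ways to distribute the remaining $24$ or $25$ cap points among the six free matrix entries, and whenever a matrix entry equals $9$ one must also track which isomorphism class of $19$- or $20$-cap $4$-flat occupies the corresponding row and whether that cap's symmetry group acts transitively on the relevant family of $3$-flat directions (as in the $(9\,1\,9)^{T}$ / $(9\,7\,2)$ example of Section~\ref{secproofs41}); the genuine care lies in verifying that the supplementary lists of $17$- and $16$-cap $4$-flats used as building blocks are exhaustive up to isomorphism. Once the candidate matrices are assembled, the individual eliminations are routine applications of the clash lemmas or short computer runs.
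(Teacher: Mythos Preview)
Your proposal for part~(c) is correct and follows essentially the same route as the paper: use Lemma~\ref{n4s17m3}(c) to force the $x_{1}=-1$ column to be one of $\{9,8,0\}$, $\{9,7,1\}$, $\{8,8,1\}$; enumerate and prune the $3$-flat point-count matrices via Lemmas~\ref{n4m3}--\ref{n5no919928} and Proposition~\ref{n5s41}; and run the computer search of Section~\ref{secproofs41}. The paper's outcome is marginally cleaner than what you anticipate --- every surviving matrix already produces a hyperplane direction with point count $\{20,20,1\}$, $\{18,18,5\}$, or one of the six counts in (a)--(b), so one never has to recognise a specific complete cap directly in part~(c) --- but the method is the same.
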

\begin{proof}
(a) Suppose that for some co-ordinate $x_{1}$ of $C$, the numbers of cap points in the $4$-flats $x_{1} = -1$, $x_{1} = 1$, and $x_{1} = 0$ are $a$, $b$, and $c$ respectively where $20 \geq a \geq 19$ and $17 \geq b \geq 16$. By Lemma \ref{n4s17m3}, there is a co-ordinate $x_{2}$ of $C$ with $(x_{1},x_{2})$ independent such that the point count of $C$ for $(x_{1},x_{2})$ is
\[\left(\begin{array}{ccc}
9 & * & *\\
(1 \textrm{ or } 2) & * & *\\
9 & * & *
\end{array}\right).\]

As mentioned in Section \ref{secproofs41}, we obtain a list of possible $3$-flat point counts. We describe how one of the triples $(a,b,c)$ is dealt with; the other cases are handled in the same way. If $(a,b,c) = (19,16,6)$, then in the $4$-flat $x_{1} = 1$, the point count of the $x_{2}$-hyperplane direction is among $\{9,7,0\}$, $\{9,6,1\}$, $\{9,5,2\}$, $\{9,4,3\}$, $\{8,8,0\}$, \ldots, and $\{6,5,5\}$; each of those options gives a list of possible $3$-flat point counts of $C$ for $(x_{1},x_{2})$, after applying Lemma \ref{n4m3}. Some of those options are excluded by Lemmas \ref{n5clash}, \ref{n5865865}, and \ref{n5no919928}, and the fact that by Proposition \ref{n5s41}, the cap $C$ has no $\{x,y,z\}$ hyperplane direction such that $x \geq y \geq 18$. For example, if the $x_{2}$-hyperplane direction of $x_{1} = 1$ has point count $\{8,5,3\}$, then by Lemma \ref{n4m3}, without loss of generality the $3$-flat point count of $C$ is
\[\left(\begin{array}{ccc}
9 & d & 5\\
1 & e & 3\\
9 & f & 8
\end{array}\right)\]
with $d \leq 4$, $e \leq 2$, and $f \leq 2$; it follows that the $3$-flat point count of $C$ is among
\[\begin{array}{c}
\left(\begin{array}{ccc}
9 & 4 & 5\\
1 & 2 & 3\\
9 & 0 & 8
\end{array}\right),\left(\begin{array}{ccc}
9 & 4 & 5\\
1 & 1 & 3\\
9 & 1 & 8
\end{array}\right),
\left(\begin{array}{ccc}
9 & 4 & 5\\
1 & 0 & 3\\
9 & 2 & 8
\end{array}\right),\\
\left(\begin{array}{ccc}
9 & 3 & 5\\
1 & 2 & 3\\
9 & 1 & 8
\end{array}\right),\left(\begin{array}{ccc}
9 & 3 & 5\\
1 & 1 & 3\\
9 & 2 & 8
\end{array}\right)\textrm{, and }\left(\begin{array}{ccc}
9 & 2 & 5\\
1 & 2 & 3\\
9 & 2 & 8
\end{array}\right);
\end{array}\]
the second and third of those six matrices are excluded because they imply an $\{18,18,5\}$ hyperplane direction and a $\{19,18,4\}$ hyperplane direction respectively, and the last of the six matrices is excluded by Lemma \ref{n5no919928}.

By the computer search method of Section \ref{secproofs41}, it was shown that each of the $3$-flat point counts
\[\left(\begin{array}{ccc}
9 & 4 & 5\\
1 & 1 & 4\\
9 & 0 & 8
\end{array}\right)\textrm{ and }\left(\begin{array}{ccc}
9 & 4 & 4\\
1 & 1 & 4\\
9 & 1 & 8
\end{array}\right)\]
implies that $C$ is a $41F$, and (apart from $3$-flat point counts that are isomorphic to the two matrices above) each other $3$-flat point count in the list yields a contradiction or implies that $C$ is a $\Delta{}686$ minus a cap point.

(b) The proof of part (a) is adapted. Lemma \ref{n4s17m3} implies that for some pair $(x_{1},x_{2})$ of co-ordinates of $C$, the $4$-flat $x_{1} = 0$ has six or seven cap points and the point count of $C$ for $(x_{1},x_{2})$ is
\[\left(\begin{array}{ccc}
9 & * & *\\
0 & * & *\\
9 & * & *
\end{array}\right), \left(\begin{array}{ccc}
8 & * & *\\
1 & * & *\\
9 & * & *
\end{array}\right)\textrm{, or } \left(\begin{array}{ccc}
8 & * & *\\
2 & * & *\\
8 & * & *
\end{array}\right),\]
where in the last of those three cases, the bottom-left $8$ in the matrix corresponds to a saddled cube. Again, we obtain a list of possible $3$-flat point counts. If the hyperplane point count of $C$ for $x_{1}$ is $\{18,17,6\}$, then for each $3$-flat point count in the list, by applying the computer search method of Section \ref{secproofs41} we obtain (i) a contradiction; (ii) a hyperplane direction of $C$ with point count $\{20,20,1\}$, $\{18,18,5\}$, $\{20,17,4\}$, $\{20,16,5\}$, $\{19,17,5\}$, or $\{19,16,6\}$; or (iii) a cap $C$ among the options specified by the required result. If the hyperplane point count of $C$ for $x_{1}$ is $\{18,16,7\}$, then for each $3$-flat point count in the list, by applying the computer search method of Section \ref{secproofs41} we obtain (i) a contradiction; (ii) a hyperplane direction of $C$ with point count $\{20,20,1\}$, $\{18,18,5\}$, $\{20,17,4\}$, $\{20,16,5\}$, $\{19,17,5\}$, $\{19,16,6\}$, or $\{18,17,6\}$; or (iii) a cap $C$ among the options specified by the required result. (The supplementary files give the full results of the computer search.)

(c) This time, we obtain a $3$-flat point count of the form
\[\left(\begin{array}{ccc}
8 & * & *\\
0 & * & *\\
9 & * & *
\end{array}\right), \left(\begin{array}{ccc}
7 & * & *\\
1 & * & *\\
9 & * & *
\end{array}\right)\textrm{, or } \left(\begin{array}{ccc}
8 & * & *\\
1 & * & *\\
8 & * & *
\end{array}\right).\]
(Here, we make no assumptions about whether any $8$-cap $3$-flats are saddled cubes.) For each $3$-flat point count, by the computer search method of Section \ref{secproofs41}, we obtain a contradiction or there is a hyperplane direction of $C$ with point count $\{a,b,c\}$ where $a = b \in \{20,18\}$ or $20 \geq a \geq 18 \geq 17 \geq b \geq 16$.
\end{proof}
\begin{remark}
No two of the representatives $41A$ to $41I$ are isomorphic: see Table \ref{tbls41m4}. Each of those nine caps is complete and has no $\{20,20,1\}$ hyperplane directions.
\end{remark}
\begin{table}
\begin{tabular}{cccccccccc}
\hline
& \multicolumn{9}{c}{$41$-cap $5$-flat}\\
$4$-flat point count & $41A$ & $41B$ & $41C$ & $41D$ & $41E$ & $41F$ & $41G$ & $41H$ & $41I$\\
\hline
$\{18,18,5\}$ & $4$ & $2$ & $2$ & $1$ & $1$ & $0$ & $0$ & $0$ & $0$\\
$\{18,17,6\}$ & $0$ & $7$ & $4$ & $1$ & $1$ & $8$ & $5$ & $4$ & $2$\\
$\{18,16,7\}$ & $4$ & $2$ & $6$ & $8$ & $6$ & $3$ & $5$ & $5$ & $10$\\
\hline
\end{tabular}
\caption{Numbers of $\{18,18,5\}$, $\{18,17,6\}$, and $\{18,16,7\}$ hyperplane directions in some $41$-cap $5$-flats.\label{tbls41m4}}
\end{table}

The classification of all $5$-dimensional caps of size at least $41$ follows.

\begin{theorem}
Every $41$-cap $5$-flat is among the following $13$ options: a $45$-cap $5$-flat minus four cap points, a $\Delta{}686$ minus one cap point, a $41A$, a $41B$, \ldots, a $41I$, a complete cap with a $\{20,20,1\}$ hyperplane direction, or a complete cap in which there is no $\{18,18,5\}$ hyperplane direction and some saddled cube is the intersection of the $18$-cap $4$-flat in a $\{18,17,6\}$ or $\{18,16,7\}$ hyperplane direction and an $18$-, $17$-, or $16$-cap $4$-flat. Every $41$-cap $5$-flat that is not in the last of those $13$ options is in exactly one of the other $12$. Every $42$-cap $5$-flat is a $45$-cap $5$-flat minus three cap points or a $\Delta{}686$. Every $5$-dimensional cap of size at least $43$ can be obtained from a $45$-cap $5$-flat by removing at most two cap points.
\end{theorem}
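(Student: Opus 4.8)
The plan is to assemble the final theorem from the machinery already in place, working downward from size $45$ to size $41$. The three smaller claims (sizes $\geq 43$, then size $42$, then size $41$) are proved in that order, since each larger case feeds into the analysis of the next.

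First I would handle sizes $\geq 43$. By Theorem \ref{n5s46m2} there is no $46$-cap $5$-flat, so it suffices to treat $s \in \{43,44,45\}$. For $s = 45$, Theorem \ref{n5s45m0} is exactly the claim (removing zero cap points). For $s \in \{43,44\}$, the plan is to run the standard-diagram argument once more (as in Proposition \ref{n5s45m4} and Theorem \ref{n5s46m2}): the centre of mass $P_{Cr}$ forces a $4$-flat direction whose point count has its two largest entries at least $19$ and $18$ — indeed Lemma \ref{n5m4largesmall}(a) then already gives $|C| \leq 41$ unless the top two entries are $(20,20)$, and in any case forces an $\{18,18,9\}$, $\{18,18,8\}$, or similar hyperplane direction — and then Lemma \ref{n5m4largesmall}(a) together with parts (b)--(f) pins $C$ down as a subcap of Figure \ref{fign5s45m0} (via the Remark observing that in Figure \ref{fign5s45m0} every non-cap point is the midpoint of exactly five cap segments, so deleting at most four cap points from a $5$-dimensional cap containing such a subcap keeps it inside Figure \ref{fign5s45m0}). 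Hence every cap of size $43$ or $44$ is a $45$-cap minus two or one cap points respectively, giving the ``at most two'' statement for $s \geq 43$.

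Next, size $42$. Here I would invoke Proposition \ref{n5s41} (which applies to any $5$-dimensional cap of size $\geq 41$ via the same standard-diagram reasoning, or re-run the diagram for $s = 42$): some hyperplane direction has point count $\{20,20,2\}$, $\{18,18,6\}$, or $20 \geq a \geq 17 \geq b \geq 16 \geq c$ with $a+b+c = 42$. A $\{20,20,2\}$ direction gives two $20$-cap $4$-flats, and chasing their structure (Thackeray \cite{T21}) forces the cap to extend to a $45$-cap, so $C$ is a $45$-cap minus three points. A $\{18,18,6\}$ direction puts us in Lemma \ref{n5m4largesmall}(b): the only subcases producing a cap of size $\geq 42$ are the $882A_{1}$/$882A_{1}$ case (Figure \ref{fign5s45882A1}, a $45$-cap, so $C$ is a $45$-cap minus three) and the $963B$/$963B$ case, which by Lemma \ref{n5m4largesmall}(f)(i) yields the representative $\Delta{}686$ or that cap minus one point — but a $42$-cap cannot be a $42$-cap minus one point, so $C$ is the $\Delta{}686$ itself. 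The remaining point counts with $17 \geq b \geq 16$ are dispatched by Proposition \ref{n5s41201716}: a $42$-cap is not a $45$-cap minus four, not a $\Delta{}686$ minus one, and not any of $41A$ through $41I$ (each of which has size $41$), so the only surviving option there is ``a $45$-cap minus three'' (when the relevant saddled-cube subcap forces inclusion in Figure \ref{fign5s45m0}) — and one checks directly that the ``complete cap'' alternatives of Proposition \ref{n5s41201716}(b) all have size exactly $41$, not $42$ (from the supplementary lists, or because a complete cap of size $42$ would already be covered by the $\{18,18,6\}$ analysis). This gives: every $42$-cap is a $45$-cap minus three or a $\Delta{}686$.

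Finally, size $41$. By Proposition \ref{n5s41}, $C$ has a $\{20,20,1\}$ hyperplane direction, a $\{18,18,5\}$ hyperplane direction, or a hyperplane direction with $20 \geq a \geq 17 \geq b \geq 16 \geq c$. If $C$ has a $\{20,20,1\}$ direction and is complete, that is one of the thirteen options; if it is not complete it extends to a cap of size $\geq 42$, already classified, and one reads off that it is a $45$-cap minus four or a $\Delta{}686$ minus one (the $\{20,20,1\}$ direction being inherited). If $C$ has an $\{18,18,5\}$ direction, Lemma \ref{n5m4largesmall}(b)--(f) and the associated Remarks give that $C$ is a subcap of Figure \ref{fign5s45m0} or of Figure \ref{fign5s42Delta686v2} (so a $45$-cap minus four or a $\Delta{}686$ minus one), or $C$ is one of $41A$ through $41E$. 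Otherwise Proposition \ref{n5s41201716} applies: parts (a), (b), (c) together show $C$ is a $45$-cap minus four, a $\Delta{}686$ minus one, one of $41F$ through $41I$ (or, folding in the $\{18,18,5\}$ case, one of $41A$ through $41I$), or the last listed complete-cap option. Collecting cases yields the thirteen options. For the uniqueness clause — that every $41$-cap not in the thirteenth option lies in exactly one of the other twelve — I would cite Table \ref{tbls41m4} (which distinguishes $41A,\ldots,41I$ by their counts of $\{18,18,5\}$, $\{18,17,6\}$, $\{18,16,7\}$ directions) together with the Remark that each of $41A,\ldots,41I$ is complete with no $\{20,20,1\}$ direction, the fact that a $45$-cap minus four and a $\Delta{}686$ minus one are both non-complete (so distinct from the $41X$ caps and from the ``complete with $\{20,20,1\}$'' option), and a direct check that the $45$-cap minus four caps and $\Delta{}686$ minus one caps are mutually non-isomorphic (e.g.\ by $4$-flat point-count profiles) and that the first twelve options have size exactly $41$ hence are disjoint from the $42$- and higher classifications.

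\medskip

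\noindent\emph{The main obstacle} is not any single hard lemma — everything needed is already proved — but the bookkeeping: verifying that in each branch the non-size-$41$ alternatives offered by Proposition \ref{n5s41201716} and Lemma \ref{n5m4largesmall} genuinely collapse for the target size (e.g.\ that a $42$-cap cannot be ``a $41F$'' or ``a $\Delta{}686$ minus one''), and that the thirteen options are exhaustive and almost-pairwise-disjoint, which rests on correctly reading Table \ref{tbls41m4} and the completeness Remarks. The subtlest point is the exclusion, for $s = 42$, of the ``complete cap with a saddled-cube intersection'' alternative of Proposition \ref{n5s41201716}(b): one must confirm from the supplementary lists (or by a completeness-versus-size argument) that every such cap has size exactly $41$.
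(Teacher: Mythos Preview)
Your ingredients are right, but the order of attack creates a real gap. Proposition~\ref{n5s41201716} is stated and proved only for $41$-caps (the ambient $C$ throughout that section is by hypothesis a $41$-cap, and the computer searches enumerating $3$-flat point counts use $|C|=41$ explicitly), so you cannot invoke it on a $42$-cap as you do in your middle step. Your attempted patch---noting that the size-$41$ alternatives it offers cannot literally hold for a $42$-cap---does not repair this, because the proposition's \emph{proof}, not just its conclusion, depends on the size. Likewise, your plan for sizes $43$ and $44$ requires new standard diagrams not in the paper.

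The paper's route is the reverse of yours: establish the $41$-cap list first, then read off sizes $\geq 42$ in one line. The apparent circularity you try to avoid---that the $\{20,20,1\}$ non-complete subcase of size $41$ seems to need the size-$\geq 42$ classification---dissolves once you notice that a $41$-cap with a $\{20,20,1\}$ hyperplane direction is \emph{automatically} complete: any proper extension still contains those two $20$-cap $4$-flats, and Lemma~\ref{n5m4largesmall}(a) then forces the extension to have size at most $41$. So the $41$-cap classification is self-contained via Proposition~\ref{n5s41}, Proposition~\ref{n5s41201716}, and Lemma~\ref{n5m4largesmall}(b)--(g).

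With that in hand, sizes $\geq 42$ are immediate. Given $C$ with $|C|\geq 42$, remove $|C|-41$ points to get a $41$-cap $C'\subsetneq C$. Since $C'$ is not complete, it is not any of $41A$--$41I$ (complete by the Remark), not option~12, and not option~13 (both demand completeness); hence $C'$ is a $45$-cap minus four or a $\Delta 686$ minus one. The Remarks after Lemma~\ref{n5m4largesmall} (every non-cap point of the $45$-cap, respectively the $\Delta 686$, is the midpoint of at least five, respectively three, cap segments) then force $C$ itself to be a subcap of the $45$-cap or of the $\Delta 686$, giving all three higher-size statements at once with no new standard diagrams. Your disjointness argument for options $1$--$12$ via Table~\ref{tbls41m4} and the completeness Remarks is fine.
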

\begin{proof}
Apply Proposition \ref{n5s41201716}, Lemma \ref{n5m4largesmall}, the remarks after those results, and Proposition \ref{n5s41}. The lists in the supplementary files include all possible caps up to isomorphism for the last of the $13$ options for $41$-cap $5$-flats; the lists may have more than one cap in each isomorphism class.
\end{proof}

\section*{Acknowledgements}

Many thanks to my postdoctoral supervisor Prof.\@ James Raftery, to Prof.\@ Roumen Anguelov, to Prof.\@ Jan Harm van der Walt, to Prof.\@ Mapundi Banda, to Prof.\@ Anton Str\"{o}h, and to everyone else at the University of Pretoria, for all their generous support at this extraordinary time.

This work was supported by the UP Post-Doctoral Fellowship Programme administered by the University of Pretoria [grant number A0X 816].

Computer searches were carried out using Java programs on the Eclipse IDE software.

Many thanks to my mother Dr Anne Thackeray for letting me use her computer together with my own to perform the search yielding Table \ref{tblcompsearch}, and to all of my family for everything.


\begin{thebibliography}{9}
\bibitem{DM03} B. L. Davis and D. Maclagan. The card game {S}et. \emph{{M}ath. {I}ntell.}, 25(3):33--40, 2003. $<$https://doi.org/10.1007/BF02984846$>$.
\bibitem{EFLS02} Y. Edel, S. Ferret, I. Landjev, and L. Storme. The classification of the largest caps in {AG}(5,3). \emph{{J}. {C}omb. {T}heory {S}er. {A}}, 99(1):95--110, 2002. $<$https://doi.org/10.1006/jcta.2002.3261$>$.
\bibitem{EG17} J. S. Ellenberg and D. Gijswijt, On large subsets of $\mathbb{F}_{q}^{n}$ with no three-term arithmetic progression, \emph{{A}nn. {M}ath. 2nd {S}er.}, 185(1):339--343, 2017. $<$https://doi.org/10.4007/annals.2017.185.1.8$>$.
\bibitem{P08} A. Potechin, Maximal caps in {AG}(6,3), \emph{{D}es. {C}odes {C}ryptogr.}, 46(3):243--259, 2008. $<$https://doi.org/10.1007/s10623-007-9132-z$>$.
\bibitem{T21} H. {(M.)} R. Thackeray, The cap set problem and standard diagrams, \emph{{D}iscrete {M}ath.}, 344(11):2021. Article 112558, $<$https://doi.org/10.1016/j.disc.2021.112558$>$.
\end{thebibliography}
\end{document}